\documentclass[11pt,leqno]{amsart}
\usepackage{amsmath}
\usepackage{amsfonts}
\usepackage{amssymb}
\usepackage{graphicx}
\usepackage{hyperref}
\setcounter{MaxMatrixCols}{30}

\providecommand{\U}[1]{\protect\rule{.1in}{.1in}}

\newtheorem{theorem}{Theorem}

\newtheorem{corollary}[theorem]{Corollary}

\newtheorem{lemma}[theorem]{Lemma}

\newtheorem{remark}[theorem]{Remark}

\newcommand{\tr}{\operatorname{tr}}	
\newcommand{\Wex}{W^{1,p}_{\mathrm{ex}}(M,N)}
\newcommand{\Win}{W^{1,p}_{\mathrm{in}}(M,N)}
\newcommand{\Wexf}{W^{1,p}_{\mathrm{ex}}(M,N,f)}
\newcommand{\Winf}{W^{1,p}_{\mathrm{in}}(M,N,f)}

	\newcommand{\interior}{\mathrm{int}}
	\newcommand{\Lip}{\operatorname{Lip}}
	\newcommand{\Jac}{\operatorname{Jac}}

\newcommand{\ball}{B^{\mathbb{R}^n}_R}
\newcommand{\ballr}[1]{B^{\mathbb{R}^n}_{#1}}

\newcommand{\rr}{\mathbb{R}}
\newcommand{\hh}{\mathbb{H}}
\newcommand{\hj}{{\hat{j}}}
\newcommand{\Hess}{\operatorname{Hess}}

\newcommand{\sect}{\operatorname{Sect}}

\newcommand{\vol}{\operatorname{Vol}}
\newcommand{\area}{\mathrm{Area}}

\newcommand{\R}{\mathbb{R}}

\newcommand{\dist}{\operatorname{dist}}

\begin{document}

\title[Dirichlet problem for harmonic maps] {Sobolev spaces of maps and the Dirichlet problem for harmonic maps}
\date{\today}
\author{Stefano Pigola}
\address{Dipartimento di Scienza e Alta Tecnologia,
Universit\`a degli Studi dell'Insubria, \\
via Valleggio 11, 
I-22100 Como, ITALY}
\email{stefano.pigola@uninsubria.it}

\author{Giona Veronelli}
\address{
Universit\'e Paris 13, Sorbonne Paris Cit\'e, LAGA, CNRS ( UMR 7539)\\
99, avenue Jean-Baptiste Cl\'ement F-93430 Villetaneuse - FRANCE }
\email{veronelli@math.univ-paris13.fr}

\subjclass[2010]{58E20, 46E35}

\keywords{harmonic maps, Sobolev spaces, geodesic balls}

\begin{abstract}In this paper we prove the existence of a solution to the Dirichlet problem for harmonic maps into a geodesic ball on which the squared distance function from the origin is strictly convex. This improves a celebrated theorem obtained by S. Hildebrandt, H. Kaul and K. Widman in 1977. In particular no curvature assumptions on the target are required. Our proof relies on a careful analysis of the Sobolev spaces of maps involved in the variational process, and on a deformation result which permits to glue a suitable Euclidean end to the geodesic ball.\end{abstract}

\maketitle

\tableofcontents

\section{Introduction and main result}

Let $(M,g)$ be a compact, connected, $m$-dimensional Riemannian manifold with
smooth boundary $\partial M\neq\emptyset$ and let $(N,h)$ be a complete,
connected, $n$-dimensional Riemannian manifold without boundary. Having fixed
a (sufficiently regular) boundary datum $f:\partial M\rightarrow N$ the
corresponding Dirichlet problem consists in finding a (sufficiently regular)
map $u:M\rightarrow N$ which extends $f$ to a harmonic map on $\mathrm{int}M$.
Recall that a map $u:\interior M\rightarrow N$ is called harmonic
if it is a stationary point of the
energy functional $E_{2}(v)=\frac{1}{2}\int_M \|dv\|^{2}_{\mathrm{HS}}d\vol$ with respect to intrinsic variations keeping the boundary
values $v|_{\partial M}=f$ fixed. In this expression, the differential $dv$ is considered as a section of the bundle $T^{\ast}M \otimes v^{-1}TN$ so that $\|dv\|_{\mathrm{HS}}= \sqrt{\tr_M h(dv,dv)}$ represents its Hilbert-Schmidt norm.
The  Euler-Lagrange equations satisfied by the stationary point $u$ of the energy functional read $\Delta u =0$ on $\mathrm{int}M$, where the manifold-valued Laplacian, in local coordinates, has an expression like $(\vec{\Delta u})^A = \Delta_M \vec{u}^A + \vec{\Gamma}^A(D\vec{u},D\vec{u})$. Here, $\Delta_M$ denotes the Laplace-Beltrami operator, $D$ stands for the differentiation of vector-valued functions and  $\vec{\Gamma}^A$ is a family of  smooth quadratic forms depending on the metric $g$ and on the Christoffel symbols of $N$. We shall refer to $\Delta u =0$ as the harmonic mapping system.\\

The Dirichlet problem for harmonic maps between Riemannian manifolds or, more generally, for metric spaces with different geometric structures, has attracted the attention of both analysts and geometers since the appearance of the foundational paper \cite{ES-AmerJour} by J. Eells and J. Sampson. Indeed, if on the one hand the problem in itself has an analytic flavour, it is immediately seen that the energy functional of a map and the corresponding Euler-Lagrange equations are directly related to the geometry of the domain and the target spaces. It follows that the solvability of the problem, the regularity of a given solution, as well as the way one is naturally led to follow to build such a solution, are very much influenced by the geometric structure of the spaces.

In order to study the solvability of the Dirichlet problem for harmonic maps we have two possible approaches to our disposal: (a) the parabolic approach, that relies on the analysis of the heat flow associated to the manifold-valued Laplacian and (b) the elliptic approach, that relies on the direct calculus of variations for the Dirichlet energy functional.

The parabolic approach, although very powerful, typically requires non-positivity of the curvature of the target space in order to obtain large-time existence of the heat flow and the corresponding convergence to a harmonic map. This restriction can be relaxed via the elliptic approach.  In this setting, as remarked also by F. H\'{e}lein and J. Wood, \cite{HW-handbook}, the most general results known in the literature (in case of trivial topology) require that the range of the boundary datum is contained in a \textit{regular ball}. This means that the ball lies within the cut-locus of its center and the radius $R$ is related to an upper bound $\Lambda >0$ of the sectional curvatures by the inequality $2R\sqrt{\Lambda}<\pi$. This curvature restriction is vital for the known proofs to work; see Appendix \ref{appendix-known} for a detailed analysis.\\

In the present paper, we shall use the purely elliptic viewpoint to obtain a solution of the Dirichlet problem without imposing any curvature condition on the target space.
\begin{theorem}\label{main_th}
Let $\left(  M,g\right)  $ be a compact, $m$-dimensional Riemannian manifold
with smooth boundary $\partial M\neq\emptyset$ and let $B_{R}^{N}\left(
q_{0}\right) $, $0<R\leq +\infty$, be a geodesic ball into the complete $n$-dimensional
Riemannian manifold $\left(  N,h\right)  $. Assume that the function $\varrho(q)=\dist^2_N(q,q_0)$ is smooth and strictly convex on $B^N_R(q_0)$. Then, for any
given $f\in C^{0}\left(  M,B_{R}^{N}\left(  q_{0}\right)  \right)  \cap
\Lip\left(  \partial M,B_{R}^{N}\left(  q_{0}\right)  \right)  $, the
Dirichlet problem 
\begin{equation}
\left\{
\begin{array}
[c]{ll}
\Delta u=0 & \text{on }M\\
u=f & \text{on }\partial M,
\end{array}
\right.  \label{p-pb}
\end{equation}
has a solution $u\in C^{\infty
}\left(  \operatorname{int}(M),B_{R}^{N}\left(  q_{0}\right)  \right)  \cap
C^{0}\left(  M,B_{R}^{N}\left(  q_{0}\right)  \right)  $.
\end{theorem}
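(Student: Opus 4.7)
The plan is to combine the direct method of the calculus of variations with a preliminary deformation of the target. A naive direct method on $(N,h)$ itself is obstructed because outside $B_R^N(q_0)$ nothing is assumed about $N$: the geometry may be wild, isometric embeddings into $\mathbb{R}^d$ need not be reasonable, and one has no control on weak limits of minimizing sequences. Accordingly, my first step is to produce a deformed target $(\tilde N,\tilde h)$ which agrees with $N$ on a neighborhood of $\overline{B_R^N(q_0)}$ and is Euclidean outside a slightly larger set, chosen so that $\varrho$ extends to a globally defined, smooth, convex exhaustion function $\tilde\varrho$ on $\tilde N$, strictly convex on the original ball, and so that $\tilde N$ embeds properly and isometrically into some $\mathbb{R}^d$ as a smooth submanifold with bounded second fundamental form. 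This is the ``gluing a Euclidean end'' step announced in the abstract.

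On the enlarged target I would work in the extrinsic Sobolev space $W^{1,2}_{\mathrm{ex}}(M,\tilde N,f)$ of $W^{1,2}(M,\mathbb{R}^d)$ maps whose image lies in $\tilde N$ almost everywhere and whose Dirichlet trace on $\partial M$ equals $f$. Since $f\in C^{0}(M,B_R^N(q_0))$ extends continuously into $B_R^N(q_0)$, this class is non-empty. The direct method then runs cleanly: a minimizing sequence is $W^{1,2}$-bounded, so after passing to a subsequence it converges weakly in $W^{1,2}(M,\mathbb{R}^d)$ and strongly in $L^2$ to some $u$, whose image still lies in $\tilde N$ a.e.\ (since $\tilde N$ is closed in $\mathbb{R}^d$) and whose trace is still $f$ by continuity of the trace operator. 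Weak lower semicontinuity of the energy, convex in $\nabla v$, produces an energy minimizer $u\in W^{1,2}_{\mathrm{ex}}(M,\tilde N,f)$.

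The next step is to confine $u$ to the original ball $B_R^N(q_0)$. Because $\tilde\varrho$ is smooth and convex on $\tilde N$, a standard computation (either by testing the weak harmonic map system against $\nabla\tilde\varrho$ on a cut-off, or by comparing $u$ with the map obtained by flowing a short time along $-\nabla\tilde\varrho$) shows that $\tilde\varrho\circ u$ is weakly subharmonic on $M$. The weak maximum principle then gives $\tilde\varrho\circ u\le \sup_{\partial M}\tilde\varrho\circ f<R^2$, confining $u$ to $B_R^N(q_0)$. In particular $u$ is a weakly harmonic map into a relatively compact subset of $N$ on which $\varrho$ is smooth and strictly convex; the classical interior regularity theory for energy minimizing maps with image in a chart supporting a strictly convex function then delivers interior smoothness, while boundary continuity follows from the continuity of $f$ together with the convex barrier $\tilde\varrho$ serving as an obstacle.

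The main obstacle I foresee is the Sobolev space analysis underpinning the direct method: in particular the interplay between the extrinsic and the intrinsic definitions of $W^{1,2}(M,\tilde N)$, the density of smooth maps in these spaces, and the behaviour of Dirichlet traces under weak convergence into a non-compact target. A second delicate point is the deformation step itself: attaching a Euclidean end so that $\varrho$ extends to a globally smooth convex function is not automatic and requires a careful choice of collar and metric interpolation near the gluing locus, since one must convexly bridge the possibly curved metric of $B_R^N(q_0)$ to the flat metric of the end without destroying the convexity of $\tilde\varrho$.
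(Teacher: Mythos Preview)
Your outline follows the paper's strategy closely: deform the target to attach a Euclidean end while preserving strict convexity of the squared distance, run the direct method, use convexity for confinement, and then apply regularity. You also correctly flag the two genuinely delicate ingredients (the gluing construction and the extrinsic/intrinsic Sobolev equivalence). There is, however, one real gap in the order of operations.

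You propose to confine the minimizer by showing that $\tilde\varrho\circ u$ is weakly subharmonic and then applying the weak maximum principle, \emph{before} establishing any boundedness or regularity of $u$. This is problematic: a priori the minimizer $u\in W^{1,2}$ is not in $L^\infty$, so $\tilde\varrho\circ u$ (with $\tilde\varrho$ growing quadratically) is only in $W^{1,1}(M)$, not $W^{1,2}(M)$, and the standard weak maximum principle (testing against $(\tilde\varrho\circ u-\sup_{\partial M}\tilde\varrho\circ f)_+$) does not apply directly. Likewise, the Schoen--Uhlenbeck regularity theory you invoke afterwards requires the image to lie in a relatively compact subset of the target; without a prior $L^\infty$ bound this is not available. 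Your alternative of ``flowing along $-\nabla\tilde\varrho$'' is a first-variation computation and again presupposes that $\nabla\tilde\varrho(u)$ is an admissible test section.

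The paper closes this gap differently and earlier. It runs the direct method in the \emph{intrinsic} space $W^{1,2}_{\mathrm{in}}(M,\tilde N,f)=W^{1,2}(M,\mathbb{R}^n)$ (normal coordinates on $\tilde N$), obtains a minimizer $v$, and then \emph{replaces} $v$ by its radial projection $\Pi(v)$ onto a large closed ball $\bar B_{R_5}$ with $R_5$ chosen in the Euclidean region $R_5>R_4$. Because $\tilde h$ is flat where $\Pi$ acts nontrivially, $\Pi$ is energy non-increasing, so $\Pi(v)$ is still a minimizer and now has bounded image. Only \emph{after} this boundedness is secured does the paper (i) invoke the equivalence of intrinsic and extrinsic Sobolev classes for bounded maps with trace data (its Sections~2--3) to transfer the minimizing property to the extrinsic space, (ii) apply Schoen--Uhlenbeck regularity to get $v\in C^\infty(\mathrm{int}\,M)\cap C^0(M)$, and (iii) apply the \emph{strong} maximum principle to the smooth subharmonic function $\tilde\varrho\circ v$ to conclude $v(M)\subset B^N_{R_1}(q_0)$. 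Note also that the radial projection is naturally an intrinsic operation (in normal coordinates of $\tilde N\cong\mathbb{R}^n$); in your purely extrinsic setup, the Euclidean projection in $\mathbb{R}^d$ does not land back on $i(\tilde N)$, which is another reason the intrinsic/extrinsic comparison is not optional here.
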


Obviously, the smoothness assumption on the distance function implies that $B^N_R(q_0) \cap \mathrm{cut}(q_0) = \emptyset$. However, it is readily seen in concrete examples that the convexity assumption does not imply any curvature restriction at the points of the same ball. The next simple example should clarify the situation.

Having fixed a smooth
function $\sigma:[0,+\infty)\rightarrow\lbrack0,+\infty)$ satisfying%
\begin{equation}
\sigma^{(2k)}\left(  0\right)  =0\text{, }\forall k \in \mathbb{N},\quad\sigma^{\prime}\left(  0\right)
=1,\quad\sigma\left(  r\right)  >0\text{, }\forall r>0,\label{model1}
\end{equation}
we shall denote by $N_{\sigma}^{n}$ the smooth $n$-dimensional Riemannian
manifold given by
\begin{equation}
\left(  \lbrack0,+\infty)\times\mathbb{S}^{n-1},dr\otimes dr+\sigma^{2}\left(
r\right)  g_{\mathbb S^{n-1}}\right) .\label{model2}
\end{equation}
Clearly, $N_{\sigma}^{n}$ is diffeomorphic to $\mathbb{R}^{n}$ and geodesically complete for any choice of $\sigma$. Usually, $N_{\sigma}^{n}$ is
called a model manifold with warping function $\sigma$ and pole $0$. The $r$-coordinate in the expression (\ref{model2}) of the metric represents the
distance from the pole. Thus, at a given point of $N_{\sigma}^{n}$ we distinguish the radial sectional curvatures and the tangential sectional
curvatures of the model, according to whether the $2$-plane at hand contains the radial vector field $\partial/\partial r$ or not. Standard formulas for
warped product metrics show that
\begin{equation}\label{sec_rot}
\sect_{rad}=-\frac{\sigma^{\prime\prime}}{\sigma},\quad \sect_{tg}=\frac{1-\left(
\sigma^{\prime}\right)  ^{2}}{\sigma^{2}}
\end{equation}
and
\begin{equation}
\mathrm{Hess}(r^2) = 2 dr\otimes dr  + 2r \sigma' (r)\sigma(r) g_{\mathbb S^{n-1}}.
\end{equation}
In particular, the smooth function $\varrho(x)=r^2(x)$ is strictly convex provided $\sigma'(r)>0$. Now, using the gluing process described in \cite{PV-pDirichlet-symmetry}, we can choose $\sigma$ to be a smooth, increasing, concave function such that $\sigma(r)=\sin(r)$ for $0 \leq r \leq \pi/4$ and $\sigma(r)=ar+b$ for every $r \gg 1$ and for appropriate $a,b>0$. Thus $\varrho$ is smooth and convex on $N^n_{\sigma}$ but, according to \eqref{sec_rot}, $\sup_{N^n_{\sigma}} \sect \geq 1$. In particular, the regularity of the ball $B^N_R(0)$ is violated for any $R \gg 1$.

Our strategy to prove Theorem \ref{main_th} consists in applying the direct calculus of variations once the target space is flattened out outside a large ball without violating the smooth convexity property of the distance function. This enables us to use Euclidean methods, such as energy-decreasing projections along the radial direction, and to get an unconstrained minimizer with prescribed boundary data. The standard maximum principle then shows that, actually, the range of the minimizer is inside the original ball and satisfies the harmonic mapping system in the original metric. In particular, it gives a solution of the Dirichlet problem. The minimization procedure, the radial projection argument, and the regularity theory applied to the minimizer all require an appropriate choice of the Sobolev space of maps. Since the target space is covered by a global system of normal coordinates one has a natural notion of ``intrinsic" Sobolev class of maps as opposed to the ``extrinsic" one which is obtained via an isometric embedding of the target into an Euclidean space. It turns out that these notions are both useful in the solution of the problem and very often  they are used interchangeably without too much care. Therefore,  a careful analysis on their reciprocal relations is needed. We take the occasion to shad a light on this delicate point at least for bounded maps. An account of the intrinsic metric viewpoint on the subject is presented in Appendix \ref{appendix-metricspaces}.\\

From the viewpoint of the methodology, this paper can be considered the third of a series of papers, \cite{PV-pDirichlet-compact, PV-pDirichlet-symmetry}, devoted to the study of solutions of the Dirichlet problem for $p$-harmonic mappings into different targets. In the first of these papers, we considered the relative homotopy Dirichlet problem for compact targets of non-positive curvature, working out the program initiated by B. White in \cite{Wh-Acta}. In the second paper we considered Cartan-Hadamard targets with rotational symmetry and, in this setting, we introduced some new geometric methods to reduce the problem to the compact case. Some of these methods are implemented and extended in the present (third) paper where, as alluded to above, we shall focus on the case where the range of the boundary datum is contained in a ball where the squared distance function from its center is smooth and strictly convex.

\section{Equivalent definitions of (bounded) Sobolev maps}\label{Sobolev-spaces}

The use of the direct calculus of variations to get minimizers of the
energy functional for manifold valued maps requires the introduction of a
suitable notion of Sobolev maps. Now, when we consider an $n$-dimensional
 target space $\left(  N,h\right)  $ which is covered by a global, normal coordinate chart centered at some point $q_0$,
 the most natural Sobolev
class of maps to work with is the one defined intrinsically, i.e., by using
the fact that $\left(  N,h\right)  $ can be identified with the Euclidean
space $\mathbb{R}^{n}$ endowed with a metric that can be expressed globally in
normal cartesian coordinates. Assuming that this identification is made once and for all then, for any  complete Riemannian domain $(M,g)$ (with possibly non-empty boundary), a map $v:M \to N$ is identified with its vector-valued representation $\vec v =(v^1,...,v^n):M \to \mathbb{R}^n$. Consequently, the intrinsic Sobolev space is defined by
\[
W^{1,p}_{\mathrm{in}}(M,N)=W^{1,p}(M,\mathbb{R}^n)
\]
the right hand side being understood as the closure of $C^{\infty}_c(M,\mathbb{R}^n)$ in the usual $W^{1,p}$ norm:
\begin{align*}
\|v\|_{W^{1,p}_\mathrm{int}}^p = \|  v \|_{L^p(M,\rr^n)}^p + \| D v \|_{L^p(M,\rr^n)}^p
\end{align*}
where, we recall, $D$ stands for the differentiation of a vector-valued map.\\

Regrettably, the regularity
theory developed by Schoen-Uhlenbeck, \cite{ScUh-JDG, ScUh-JDG2}, later extended to constrained maps and to the $p$-harmonic realm by Fuchs, \cite{Fu-Carolina, Fu1-Annali, Fu2-Annali}, Hardt-Lin, \cite{HL-CPAM} and Luckhaus, \cite{Lu-Crelle}, makes use of a notion of a Sobolev map which has a very extrinsic nature, namely, it requires the manifold at hand $(N,h)$ to be
embedded into some Euclidean space via an isometric embedding $i:N \hookrightarrow \mathbb{R}^k$. According to J. Nash famous theorem this is
always possible and, moreover, if $(N,h)$ is complete, we can also guarantee that the embedding is proper so that  $i(N)$ is a closed subset of $\mathbb{R}^k$, \cite{Mu-JMAA}. For complete targets we shall always tacitly assume that $i$ is a proper embedding. Thus, given a map $v:M \to N$, if we put
\[
\check{v} = i \circ v :M \to \mathbb{R}^k,
\]
in this extrinsic setting one defines
\begin{align}
W^{1,p}_{\mathrm{ex}}(M,N) &= \{ v:M \to N : \check{v} \in W^{1,p}(M,\mathbb{R}^k)\nonumber \\
	&= \{w \in W^{1,p}(M,\mathbb{R}^k): w(x) \in i(N) \text{ a.e.} \} \nonumber.
\end{align}
Even if the space $W^{1,p}_{\mathrm{ex}}(M,N)$ depends strongly on the embedding whenever $N$ is non-compact, in view of our purposes we shall omit this dependence in the notation and assume that such an embedding is fixed once and for all. Note that no explicit control on the extrinsic geometry of the submanifold $i(N)$ is available regardless of the fact that, from the
intrinsic viewpoint, the manifold  has a very simple structure. Thus, if
on the one hand, the extrinsic viewpoint is completely general and overcome
any possible difficulty derived from the topological complexity of the target
space, on the other hand it introduces unnatural difficulties of extrinsic
nature even in the simplest case described above (think of a Cartan-Hadamard target).
\smallskip

The main result of this section states that there is no difference between the intrinsic and extrinsic Sobolev classes of a bounded maps. In the next section we shall show that a similar conclusion holds in case the Sobolev maps have prescribed boundary values (in the trace sense).

\begin{theorem}\label{th_equiv-sob_1}
Let $(M,g)$ and $(N,h)$ be complete Riemannian manifolds with, possibily, $\partial M \not= \emptyset$ and $N$ endowed with a global normal coordinate chart centered at $q_0$. Assume that a proper, isometric embedding $i:N \hookrightarrow \mathbb{R}^k$ is chosen. Let $v:M \to B^N_{R_0}(q_0)\subset N$, for some $R_0>0$.
Then
\[
v \in W^{1,p}_{\mathrm{ex}}(M,N) \text{ if and only if } v\in W^{1,p}_{\mathrm{in}}(M,N).
\]
\end{theorem}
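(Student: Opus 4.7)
The plan is to translate the problem into a standard chain-rule statement for Sobolev maps composed with Lipschitz maps, by exhibiting smooth ``conversion'' maps between the two representations that are defined on neighborhoods of the (compact) ranges involved.

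First I would fix the natural conversion map. Since $N$ carries a global normal coordinate chart at $q_0$, the exponential map identifies $T_{q_0}N \cong \rr^n$ with $N$, and we may set $\Phi:=i\circ \exp_{q_0}:\rr^n\to\rr^k$, which is smooth and satisfies $\check v=\Phi\circ\vec v$. Next I would observe that both ranges lie in compact sets: by Hopf--Rinow the closed ball $\overline{B^N_{R_0}(q_0)}$ is compact in $N$, hence $\vec v(M)\subset \overline{\ballr{R_0}(0)}$ and $\check v(M)\subset K:=i\bigl(\overline{B^N_{R_0}(q_0)}\bigr)$, the latter being compact in $\rr^k$ because $i$ is a proper embedding.

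For the implication $\Win\Rightarrow \Wex$, the restriction $\Phi|_{\overline{\ballr{R_0}(0)}}$ is smooth on a compact set, therefore Lipschitz. The standard chain rule for compositions of Sobolev maps with Lipschitz maps (with an additive constant absorbed to handle $L^p$-integrability when $M$ has infinite measure) produces $\check v=\Phi\circ \vec v\in W^{1,p}(M,\rr^k)$, with $\nor{D\check v}_{L^p}\leq \Lip(\Phi|_{\overline{\ballr{R_0}(0)}})\,\nor{D\vec v}_{L^p}$. The opposite implication is the delicate one, because the natural inverse $\Psi_0:=\exp_{q_0}^{-1}\circ i^{-1}$ lives only on $i(N)$ and is therefore not a priori Lipschitz as a map between open subsets of Euclidean spaces. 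To remedy this, I would invoke the tubular neighborhood theorem around the compact submanifold-with-boundary $K\subset i(N)$: there exists an open neighborhood $U\subset\rr^k$ of $K$ together with a smooth nearest-point projection $\pi:U\to i(N)$. Then $\Psi:=\Psi_0\circ\pi:U\to\rr^n$ is smooth, hence Lipschitz on a smaller neighborhood $V\Subset U$ containing $K$, and satisfies $\vec v=\Psi\circ\check v$ almost everywhere because $\check v(x)\in i(N)$ a.e.\ and $\pi$ is the identity on $i(N)$. The chain rule again gives $\vec v\in W^{1,p}(M,\rr^n)$.

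The main obstacle is the construction of $\Psi$: we need a globally defined Lipschitz extension of $\exp_{q_0}^{-1}\circ i^{-1}$ on a neighborhood of $K$ in $\rr^k$, and this is precisely where the \emph{boundedness} of $v$ is essential. Without it, $i(N)$ may fail to admit a uniform tubular neighborhood (the second fundamental form of $i(N)\subset\rr^k$ is uncontrolled), so the extrinsic Sobolev class and the intrinsic one need not coincide for unbounded maps. Once $\Psi$ is in hand, everything reduces to the well-known Lipschitz-composition rule in $W^{1,p}$, modulo minor bookkeeping about $L^p$-integrability that can be handled by recentering at $i(q_0)$ and $0$ respectively.
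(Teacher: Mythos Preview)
Your proposal is correct and takes a genuinely different route from the paper. The paper formalizes the proof by first constructing (Theorem~\ref{th_flatten-ball}, via Palais' extension theorems) a global $C^{\infty}$ diffeomorphism $F:\rr^k\to\rr^k$ that straightens $i(B^N_{R_0}(q_0))$ onto $B^{\rr^n}_{R_0}\times\{0_{k-n}\}$ and equals the identity outside a large ball; both implications then follow by applying a composition lemma (Lemma~\ref{lemma_Fconv}) to $F$ and $F^{-1}$ acting on approximating sequences. You instead stay local: in the delicate direction you build the inverse conversion map only on a tubular neighborhood of the compact image $K=i(\overline{B^N_{R_0}(q_0)})$ via nearest-point projection, and then invoke the standard $C^1$/Lipschitz chain rule directly. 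This is more elementary and avoids the Palais machinery; in fact the paper itself sketches your line of reasoning in the paragraphs preceding the formal proof (equations \eqref{gradient-norms}--\eqref{dist-norms} and the discussion of the ratio $\alpha$) before opting for the ``more synthetic'' global-diffeomorphism argument. What the paper's route buys is a single globally bi-Lipschitz object $F$ with $F(0)=0$ that is reused verbatim in Section~\ref{section-f-sobolev} to handle trace conditions (Theorem~\ref{th_equiv-sob_2}), where tracking approximating sequences through $F$ and $F^{-1}$ makes the boundary argument transparent. Your tubular-neighborhood map $\Psi$ would also suffice there, but you would need to redo the trace bookkeeping separately. One small point worth making explicit in your write-up: since $\Phi$ and $\Psi$ are only locally Lipschitz, you should state that you extend them to globally Lipschitz maps on $\rr^n$ (resp.\ $\rr^k$) fixing the origin---this is harmless because $\vec v$ and $\check v$ take values a.e.\ in the compact sets where the extensions agree with the original maps, and it is needed for the $L^p$ part of the chain rule when $\vol(M)=+\infty$.
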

To this purpose, we preliminarily observe that the implication
\[
v \in W^{1,p}_{\mathrm{in}}(M,N) \Rightarrow v\in W^{1,p}_{\mathrm{ex}}(M,N)
\]
is quite natural. Indeed, for any compactly supported smooth map $w:M\to B^N_{R_0}(q_0) \Subset \rr^n$, since the embedding $i$ is isometric and the Riemannian metric $h$ is equivalent to the Euclidean metric on $\bar{B}^N_{2R_0}(q_0)$, we have
\begin{equation}\label{gradient-norms}
\|D w\|_{L^p(M,\mathbb{R}^n)}^p \approx \int_M \| dw \|_{\mathrm{HS}(M,N)}^p d\mathrm{vol}=  \| D\check{w}\|^p_{L^p{(M,\mathbb{R}^k)}},
\end{equation}
where the symbol $\approx$ means that the ratio of the two quantities is in between two positive constants, uniformly with respect to $w$. Moreover, if $v:M \to B^N_{R_0}(q_0) \Subset \rr^n$ is a second map,
\[
\dist_N(w(p),v(p)) \geq \dist_{\rr^k}(\check{w}(p), \check{v}(p))
\]
and, therefore,
\begin{equation}\label{dist-norms}
\| w - v\|_{L^p(M,\mathbb{R}^n)} \approx \| \dist_N(w,v)\|_{L^p(M,\rr)} \geq \| \check{w} - \check{v}\|_{L^p{(M,\mathbb{R}^k)}}.
\end{equation}
Accordingly, one is led to conclude the validity of the stated implication using the definition of the Sobolev space $W^{1,p}_{\mathrm{int}}(M,N)$ via density and the weak compactness of bounded subsets of $W^{1,p}(M,\rr^k)$.

As for the opposite implication, in view of  \eqref{gradient-norms}, the main point concerns with the relation between the extrinsic and the intrinsic $L^p$-norms of compactly supported approximating maps. Since the range of all maps is confined into a fixed compact set $\Omega$ in the embedded
target, it is reasonable that extrinsic and intrinsic distances in
$\Omega\times\Omega$ are equivalent, that is, the function $\alpha(p_1,p_2)=\dist_{N}\left(  p_1,p_2\right)/\dist_{\rr^k}(i(p_1),i(p_2))$ is bounded from above (and strictly positive) on $\Omega \times \Omega$. This obviously follows once we show that $\alpha$ is continuous. The only possible problems could occur along the diagonal, but near these points (recall that the submanifold is properly embedded) the two distances look asymptotically the same.\\

We are going to formalize a proof of both the implications stated in Theorem \ref{th_equiv-sob_1} using a somewhat more synthetic argument. This relies on a related result which is interesting in its own and will be crucial when boundary data in the trace sense are imposed; see Section \ref{section-f-sobolev}. Roughly speaking, we shall prove that we can flatten out the ball $i(B^N_{R_0}(q_0)) \subset \rr^k$ via a bi-Lipschitz ambient diffeomorphism that does not change the Euclidean space outside a large ball.

\begin{theorem} \label{th_flatten-ball} Let $(N,h)$ be a complete Riemannian manifold properly embedded into some Euclidean space via the isometric embedding
$i:N \hookrightarrow \mathbb{R}^k$.
Let $R_{1}>R_{0}>0$ be fixed radii with $R_0<\mathrm{inj}(q_0)$. Then, there exists
a smooth  diffeomorphism $F:\mathbb{R}^{k}\rightarrow
\mathbb{R}^{k}$ satisfying the following properties:

\begin{enumerate}
\item[(a)] $F\circ i=\mathrm{id}_{\mathbb{R}^{n}}\times0_{k-n}$ on $B_{R_{0}}
^{N}(q_0)  .$

\item[(b)] $F=\mathrm{id}_{\mathbb{R}^{k}}$ on $\mathbb{R}^{k}\backslash
B_{R_{1}}^{\mathbb{R}^{k}}\left(  0_{k}\right)  $.
\end{enumerate}
\end{theorem}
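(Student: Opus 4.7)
The strategy is to realize $F$ as the time-$1$ inverse of a compactly supported ambient smooth isotopy of $\mathbb{R}^k$ that deforms the standard $n$-disk $\bar B^n_{R_0}(0)\times\{0_{k-n}\}$ onto the submanifold $i(\bar B^N_{R_0}(q_0))$. As a first step, I fix $R_0<R'_0<\mathrm{inj}(q_0)$ so that $\exp_{q_0}:\bar B^n_{R'_0}(0)\to \bar B^N_{R'_0}(q_0)$ is a diffeomorphism and $\Omega:=i(\bar B^N_{R'_0}(q_0))$ is a compact, smoothly embedded $n$-submanifold with boundary of $\mathbb{R}^k$, diffeomorphic to a closed disk. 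Since $\Omega$ is contractible, its normal bundle in $\mathbb{R}^k$ is trivial, so one can pick a global smooth orthonormal frame $e_1,\dots,e_{k-n}$. By the tubular neighborhood theorem, for $\rho>0$ small enough the map
\[
\Phi(x,y)=i(\exp_{q_0}(x))+\sum_{j=1}^{k-n}y_j\,e_j\bigl(i(\exp_{q_0}(x))\bigr)
\]
is a smooth diffeomorphism from $W:=\bar B^n_{R'_0}(0)\times\bar B^{k-n}_\rho(0)\subset\mathbb{R}^k$ onto an open tubular neighborhood $T\subset\mathbb{R}^k$ of $\Omega$; flipping the sign of one $e_j$ if necessary, I assume $\det d\Phi_{(0,0)}>0$.

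Next I would produce a smooth path $\Phi_s:W\to\mathbb{R}^k$, $s\in[0,1]$, of smooth embeddings joining the natural inclusion $\Phi_0(v)=v$ to $\Phi_1=\Phi$. For $s\in(1/2,1]$ I use the rescaling
\[
\Phi_s(v)=\Phi(0)+\frac{1}{2s-1}\bigl[\Phi\bigl((2s-1)v\bigr)-\Phi(0)\bigr],
\]
which extends $C^\infty$-smoothly across $s=1/2$ to the affine isomorphism $v\mapsto\Phi(0)+d\Phi_{(0,0)}v$; for $s\in[0,1/2]$ I concatenate with a smooth path in the connected group of orientation-preserving affine transformations of $\mathbb{R}^k$ joining the identity to this linear map, which exists thanks to $\det d\Phi_{(0,0)}>0$. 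Restricting the isotopy to the compact smooth disk $K:=\bar B^n_{R_0}(0)\times\{0\}\subset W$, the classical isotopy extension theorem produces a smooth ambient isotopy $H_s:\mathbb{R}^k\to\mathbb{R}^k$ with $H_0=\mathrm{id}$, $H_s\circ\Phi_0|_K=\Phi_s|_K$, and $\supp H_s$ contained in any prescribed open neighborhood of the bounded set $\bigcup_{s\in[0,1]}\Phi_s(K)$. Setting $F:=H_1^{-1}$, one has
\[
F\bigl(i(\exp_{q_0}(x))\bigr)=F\bigl(\Phi_1(x,0)\bigr)=\Phi_0(x,0)=(x,0_{k-n})
\]
for every $x\in\bar B^n_{R_0}(0)$, giving (a), while choosing the neighborhood inside $B^{\mathbb{R}^k}_{R_1}(0_k)$---admissible provided $R_1$ is large enough to contain the bounded trajectory---yields (b).

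The main obstacle is the regularity of the rescaled family $\Phi_s$ at the gluing time $s=1/2$: one must check, via the Taylor identity $\Phi(tv)-\Phi(0)=t\int_0^1 d\Phi_{\tau tv}(v)\,d\tau$, that the rescaled maps extend $C^\infty$-smoothly to their linearization and that each intermediate differential stays uniformly invertible on the compact set $W$, so that every $\Phi_s$ is genuinely a smooth embedding. Once this technical point is secured, the remainder of the argument is a routine packaging of the tubular neighborhood theorem and the isotopy extension theorem in their standard smooth, compact form.
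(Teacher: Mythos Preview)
Your argument is correct but follows a genuinely different route from the paper. The paper first normalizes so that $i(q_0)=0_k$ and $di_{q_0}=\mathrm{id}_{\mathbb{R}^n}\times 0_{k-n}$, extends the inverse chart $f=(i|_{B^N_{R_0+\delta}})^{-1}\times 0_{k-n}$ to a tubular diffeomorphism $\varphi$ via the normal exponential map, and then invokes Palais's extension results (his Lemma~5.1 and Theorem~B) as black boxes to globalize this to a diffeomorphism of $\mathbb{R}^k$ equal to the identity outside $B^{\mathbb{R}^k}_{R_1}$. You instead manufacture an explicit smooth isotopy of embeddings of the closed disk---rescaling $\Phi$ toward its linearization for $s\in[1/2,1]$ and then connecting to the identity through the orientation-preserving affine group for $s\in[0,1/2]$---and appeal once to the isotopy extension theorem. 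The underlying ``shrink to the linearization'' idea is exactly the device behind Palais's lemma, so the two proofs share their core; yours trades several specialized references for a single textbook tool, at the price of having to check the smooth gluing at $s=1/2$ by hand (note that matching the affine branch to the rescaling branch in $C^\infty$ needs a reparametrization that is flat at the junction, not only the one-sided Taylor estimate you flag). Your caveat that $R_1$ must be large enough to contain the trajectory is harmless for the applications in the paper; if you also adopt the paper's normalization $i(q_0)=0_k$, the isometry of $i$ forces $i(B^N_{R_0}(q_0))\subset B^{\mathbb{R}^k}_{R_0}(0_k)$ and your rescaling branch then stays inside $\bar B^{\mathbb{R}^k}_{R_0}$ as well, so only the affine segment requires any enlargement of $R_1$.
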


\begin{proof}
The result is essentially due to R. Palais, \cite[Theorem C]{P-extendingPAMS},
so we will only sketch the arguments adapted to our situation.

In the normal coordinate system of $N$ centered at $q_0$, we can identify $B^N_{R_0}(q_0)$ with the Euclidean ball $B^{\mathbb{R}^n}_{R_0}(0_n) \times 0_{k-n} \subset \mathbb{R}^k$ endowed with the metric $h$.
Without loss of generality, up to composing $i$ with an isometry of $\mathbb{R}^{k}$ (possibly
orientation reversing), we can also assume that $i\left(  0_{n}\right)
=0_{k}\in\mathbb{R}^{k}$, $T_{0_k}i\left(  N\right)  =\mathbb{R}^{n}\times0_{k-n}$ and
\[
i_{\ast0_{n}}=\mathrm{id}_{\rr^n}\times 0_{k-n} :T_{0_{n}}N=\mathbb{R}^{n}\rightarrow
T_{0}i\left(  N\right)  =\mathbb{R}^{n}\times0_{k-n}.
\]
Let $0<\delta \ll 1$ and consider the diffeomorphism
\[
\begin{array}
[c]{cccc}
f=\left.  i\right\vert _{B_{R_{0}+\delta}^{N}\left(  0_{n}\right)  }
^{-1}\times0_{k-n}: & \mathcal{B}_{R_{0}+\delta} & \rightarrow &
B_{R_{0}+\delta}^{N}\left(  0_{n}\right)  \times0_{k-n}\\
& \cap &  & \cap\\
& B_{R_{1}}^{\mathbb{R}^{k}}\left(  0_{k}\right)  &  & B_{R_{1}}
^{\mathbb{R}^{k}}\left(  0_{k}\right)
\end{array}
\]
where we have set $\mathcal{B}_R=i(B^N_R(q_0))$.
It is clear that any diffeomorphism $F:\mathbb{R}^{k}\rightarrow\mathbb{R}%
^{k}$ that extends $f$ and is the identity map outside $B_{R_{1}}%
^{\mathbb{R}^{k}}\left(  0_{k}\right)  $ has the desired properties. As in
\cite[Theorem A]{P-extendingPAMS}, it is easier to deal with $f^{-1}%
:B_{R_{0}+\delta}^{N}\left(  0_{n}\right)  \times0_{k-n}\rightarrow
\mathcal{B}_{R_{0}+\delta}$.

First of all, since the normal bundle of the contractible space $\mathcal{B}$ is trivial, the corresponding normal exponential map $\exp^{\perp}$ can be used to define a diffeomorphism
\[
\begin{array}
[c]{cccc}%
\varphi : & B_{R_{0}+\delta}^{N}\left(  0_{n}\right)  \times\left(
-\delta,\delta\right)  ^{k-n} & \rightarrow & \mathcal{N}_{\delta}\left(
\mathcal{B}_{R_{0}+\delta}\right) \\
& \cap &  & \cap\\
& B_{R_{1}}^{\mathbb{R}^{k}}\left(  0_{k}\right)  &  & B_{R_{1}}%
^{\mathbb{R}^{k}}\left(  0_{k}\right)
\end{array}
\]
by setting
\[
\varphi (x,y)  =\exp^{\bot}( f^{-1}(  x),y) .
\]
Here $\mathcal{N}_{\delta}\left(
\mathcal{B}_{R_{0}+\delta}\right)$ is the restriction of a tubular neighborhood of $i(N)$ to $\mathcal{B}_{R_{0}+\delta}$. Up to chose $\delta$ small enough, the restriction of the tubular neighborhood can be chosen uniform of radius $\delta$, and the diffeomorphism $\varphi$ is onto $\mathcal{N}_{\delta}\left(
\mathcal{B}_{R_{0}+\delta}\right)$.

Thus, $\varphi$ extends $f^{-1}$
outside $B_{R_{0}+\delta}^{N}\left(  0_{n}\right)  \times0_{k-n}$, and
satisfies $\varphi\left(  0_{k}\right)  =0_{k}$ and $\mathrm{Jac}\left(
\varphi\right)  \left(  0\right)  =\mathrm{I}_{k}$.

Next, according to \cite[Lemma 5.1]{P-natural-TAMS}, we pick a small
real number $0< \delta^{\prime} \ll 1$ in such a way that there exists a diffeomorphism
\[
H:\mathbb{R}^{k}\rightarrow\mathbb{R}^{k}=\mathbb{R}^n\times\mathbb{R}^{k-n}
\]
satisfying the following properties:

\begin{itemize}
\item $H=\varphi$ on $B_{\delta^{\prime}}^{N}\left(  0_{n}\right)
\times\left(  -\delta^{\prime},\delta^{\prime}\right)  ^{k-n};$

\item $H\left(  y\right)  =\mathrm{id}_{\mathbb{R}^{k}}\left(  y\right)  $ on
$\mathbb{R}^{k}\setminus B_{R_{1}}^{\mathbb{R}^{k}}\left(  0_{k}\right)  $.
\end{itemize}

The desired diffeomorphism $F:\mathbb{R}^{q}\rightarrow\mathbb{R}^{q}$ is finally
obtained by applying to $H$ the construction in \cite[Theorem B]{P-extendingPAMS}. This amounts to take the composition  $F = F^2 \circ H \circ F^1$ with ambient diffeomorphisms $F^1, F^2: \rr^k \to \rr^k$ that, respectively, shrink into and expand outside  $B_{\delta^{\prime}}^{N}\left(  0_{n}\right) \times (-\delta',\delta')$  a normal tubular neighborhood of $\mathcal B_{R_0 + \delta/4}$ and, moreover, that leave $\mathbb{R}^{k}\setminus B_{R_{1}}^{\mathbb{R}^{k}}\left(  0_{k}\right)  $ fixed.

\end{proof}

We observe that the map $F$ constructed in the previous theorem well behaves with respect to $\Wex$-convergence. This fact will be used systematically, therefore we state it as a formal lemma, which can be proved by standard computation.

\begin{lemma}\label{lemma_Fconv}
Let $(M,g)$ be a complete, $m$-dimensional Riemannian manifold possibly with $\partial M \not =\emptyset$, and let $F:\rr^k \to \rr^k$ be a $C^1$-map such that $\sup_{\rr^k} \| \Jac F\|_{\rr^{k^2}}  <+\infty.$ Let $\{ v_j\} \subset W^{1,p}(M,\rr^k)$ be a sequence satisfying $v_j \to v \in W^{1,p}(M,\rr^k)$ in $W^{1,p}(M,\rr^k)$. Then, there exists a subsequence $\{v_{j'}\}$ such that $F(v_{j'}) \to F(v) \text{ in } W^{1,p}(M,\rr^k).$
\end{lemma}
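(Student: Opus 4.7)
I would observe first that $L:=\sup_{\rr^k}\|\Jac F\|<+\infty$ forces $F$ to be globally Lipschitz on $\rr^k$ with constant at most $L$. In particular, composition with $F$ maps $W^{1,p}(M,\rr^k)$ into itself, and the chain rule
\[
D(F\circ w)(x) = \Jac F(w(x))\cdot Dw(x)\quad\text{for a.e. }x\in M
\]
holds for every $w\in W^{1,p}(M,\rr^k)$. This is the classical composition result for Sobolev maps composed with a globally Lipschitz $C^{1}$ function (Marcus--Mizel / Meyers). The plan is then to reduce the $W^{1,p}$ convergence of $F(v_{j'})$ to two applications of dominated convergence along a subsequence on which $v_{j'}\to v$ pointwise almost everywhere.

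Since $v_j\to v$ in $L^{p}(M,\rr^k)$, a standard extraction produces a subsequence $\{v_{j'}\}$ with $v_{j'}(x)\to v(x)$ for a.e.\ $x\in M$ and $|v_{j'}(x)|\le h(x)$ a.e.\ for some $h\in L^{p}(M)$. The Lipschitz inequality $|F(v_{j'})-F(v)|\le L\,|v_{j'}-v|$ immediately yields $F(v_{j'})\to F(v)$ in $L^{p}(M,\rr^k)$. For the derivative part I would use the chain rule on both sides and split
\[
D(F\circ v_{j'})-D(F\circ v) = \Jac F(v_{j'})\cdot(Dv_{j'}-Dv) + \bigl(\Jac F(v_{j'})-\Jac F(v)\bigr)\cdot Dv .
\]
The first summand is pointwise bounded in norm by $L\,|Dv_{j'}-Dv|$ and therefore vanishes in $L^{p}$ by the $W^{1,p}$-convergence of $v_{j'}$. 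For the second, the a.e.\ convergence $v_{j'}\to v$ together with the continuity of $\Jac F$ gives $\Jac F(v_{j'})\to \Jac F(v)$ pointwise a.e., while $\|\Jac F(v_{j'})-\Jac F(v)\|\cdot |Dv|\le 2L\,|Dv|\in L^{p}(M)$, so dominated convergence closes the argument.

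I do not anticipate any genuine difficulty: the only point worth singling out is the Sobolev chain rule for $F\circ w$, which is precisely where the assumption $\sup\|\Jac F\|<+\infty$ is used in an essential way, as it ensures simultaneously that composition preserves the class $W^{1,p}$ and that the pointwise formula for the distributional differential is valid. Once this is granted, everything reduces to the two dominated-convergence arguments above, which is the ``standard computation'' alluded to by the authors.
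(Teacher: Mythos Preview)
Your argument is correct and is exactly the ``standard computation'' the paper alludes to without spelling out a proof. One small caveat: the blanket claim that composition with $F$ preserves $W^{1,p}(M,\rr^k)$ can fail when $\vol(M)=+\infty$ and $F(0)\neq 0$, but your estimates still show that $F(v_{j'})-F(v)\in W^{1,p}$ and tends to zero there, which is what is needed (and in every application in the paper either $M$ is compact or the diffeomorphism of Theorem~\ref{th_flatten-ball} satisfies $F(0_k)=0_k$).
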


We are now in the position to give a very direct proof of Theorem  \ref{th_equiv-sob_1}.

\begin{proof}[Proof (of Theorem \ref{th_equiv-sob_1})]
Let $v\in \Wex$ be
such that $v(x)\in B_{R_0}^N(q_0)$ for a.e. $x\in M$. Let $\{w_{j}\}\subset C_c^{\infty}(M,\mathbb{R}^{k})$ be a sequence of smooth maps
satisfying $w_{j}\rightarrow\check{v}$ in $W^{1,p}(M,\mathbb{R}^{k})$
as $j\rightarrow\infty$. Let $F:\rr^k\to \rr^k$ be the diffeomorphism defined in
Theorem \ref{th_flatten-ball}. By Lemma \ref{lemma_Fconv}, up to extracting subsequences, we can assume that $F(w_j) \to F(\check{v})$ in $W^{1,p}(M,\rr^k)$ and a.e. in $M$. In view of the properties of $F$,
\[
F\circ\check{v}(x) = v(x) \times 0_{k-n}
\text{ a.e.} \ x\in M.
\]
Hence, if we denote by $\pi_n:\rr^k=\rr^n \times \rr^{k-n} \to \rr^n$ the standard orthogonal projection onto the first $n$-coordinates, we have
\[
v=\pi_{n}\circ F \circ\check{v}\in \Win = W^{1,p}(M,\rr^n).
\]

Conversely, let $v \in \Win$ be such that $v(x)\in B_{R}^N(q_0)$ for a.e. $x\in M$. We consider a sequence $\{v_j\} \subset C^{\infty}_c(M,\rr^n)$ satisfying $v_{j}\rightarrow\ v $ in $W^{1,p}(M,\mathbb{R}^{n})$ and we note that $v_j \times 0_{k-n} \to v\times 0_{k-n} $ in $W^{1,p}(M,\rr^k)$. Therefore, by Lemma \ref{lemma_Fconv} applied to $F^{-1}$, we conclude that, up to extracting a subsequence, $F^{-1}(v_j \times 0_{k-n}) \to F^{-1}(v\times 0_{k-n})$  in $W^{1,p}(M,\rr^k)$. In particular, by the properties of $F$, $$\check{v}(x) = F^{-1}(v(x)\times 0_{k-n}) \in W^{1,p}(M,\rr^k)=\Wex.$$ This completes the proof.
\end{proof}

We conclude this section by observing the following approximation result.
	\begin{corollary}
	Let $(M,g)$ and $(N,h)$ be complete Riemannian manifolds with, possibily, $\partial M \not= \emptyset$ and $N$ endowed with a global normal coordinate chart centered at $q_0$. Assume that a proper, isometric embedding $i:N \hookrightarrow \mathbb{R}^k$ is chosen. Let $v\in\Wex$ such that $v(x)\in B^N_R(q_0)\subset N$ for a.e. $x\in M$, for some $R>0$. Then there exists a sequence $\{v_j\}\subset C^\infty(M,N)$ such that $v_j\to v$ in $\Wex$ as $j\to\infty$.
	\end{corollary}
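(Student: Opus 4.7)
The plan is to chain together the two main results of the section. Fix $R_0 > R$, which is permissible since the existence of a global normal chart at $q_0$ forces $\mathrm{inj}(q_0) = +\infty$. Then Theorem~\ref{th_flatten-ball} produces a smooth diffeomorphism $F:\mathbb{R}^k \to \mathbb{R}^k$, equal to the identity outside a large ball (so $\|\mathrm{Jac}(F^{\pm 1})\|$ is globally bounded), with $F \circ i = \mathrm{id}_{\mathbb{R}^n} \times 0_{k-n}$ on $B^N_{R_0}(q_0)$. In the normal coordinate identification $B^N_{R_0}(q_0) \equiv B^{\mathbb{R}^n}_{R_0}(0)$, this means $\check{v} = F^{-1}(v \times 0_{k-n})$ a.e.

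First I would push $v$ to the intrinsic side: by Theorem~\ref{th_equiv-sob_1}, $v \in \Win = W^{1,p}(M,\mathbb{R}^n)$, so by definition there is a sequence $\{\tilde v_j\} \subset C^{\infty}_c(M,\mathbb{R}^n)$ with $\tilde v_j \to v$ in $W^{1,p}(M,\mathbb{R}^n)$. The maps $\tilde v_j$, however, take values in all of $\mathbb{R}^n$, so I would post-compose with a smooth Lipschitz retraction $\psi : \mathbb{R}^n \to \overline{B^{\mathbb{R}^n}_{R_0}(0)}$ that is the identity on $\overline{B^{\mathbb{R}^n}_R(0)}$ (e.g.\ $\psi(y) = \phi(|y|)\, y/|y|$ for a smooth $\phi$ with $\phi(t) = t$ near $0$ and $\phi \leq R_0$). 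Since $\psi$ is smooth and globally Lipschitz, $v_j := \psi \circ \tilde v_j \in C^{\infty}(M,\mathbb{R}^n)$; the Lipschitz–chain rule for Sobolev maps gives $v_j \to \psi \circ v = v$ in $W^{1,p}(M,\mathbb{R}^n)$ (using that $v(x) \in B^N_R(q_0)$ a.e., so $\psi \circ v = v$).

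Next, reading $v_j$ through normal coordinates yields a smooth map $M \to \overline{B^N_{R_0}(q_0)} \subset N$, which by construction has $i \circ v_j = F^{-1}(v_j \times 0_{k-n})$. Since $v_j \times 0_{k-n} \to v \times 0_{k-n}$ in $W^{1,p}(M,\mathbb{R}^k)$, Lemma~\ref{lemma_Fconv} applied to $F^{-1}$ yields, after extracting a subsequence, $\check v_j = F^{-1}(v_j \times 0_{k-n}) \to F^{-1}(v \times 0_{k-n}) = \check v$ in $W^{1,p}(M,\mathbb{R}^k)$. This is precisely $v_j \to v$ in $\Wex$.

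The only delicate point I foresee is engineering the retraction $\psi$: it has to be simultaneously smooth on all of $\mathbb{R}^n$ (so that $v_j$ lands in $C^{\infty}(M,N)$), globally Lipschitz (so that $W^{1,p}$-convergence is preserved through composition), and fix $\overline{B^{\mathbb{R}^n}_R(0)}$ pointwise (so that $\psi \circ v = v$ and the values of $v_j$ remain inside the chart domain of $F$). This is routine with a smooth radial cutoff, after which the proof is simply a bookkeeping of the intrinsic/extrinsic correspondence already encoded in $F$.
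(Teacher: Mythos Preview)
The paper states this corollary without proof, as an observation following the proof of Theorem~\ref{th_equiv-sob_1}. Your argument is correct and is precisely the natural one: it recycles the $\Win\Rightarrow\Wex$ direction of that proof, with the one extra ingredient (the smooth radial retraction $\psi$) needed to force the smooth approximants to take values in the coordinate ball, hence in $N$. Two minor cosmetic points: choose $\psi$ so that its image lies in the \emph{open} ball $B^{\mathbb{R}^n}_{R_0}(0)$ (or equivalently apply Theorem~\ref{th_flatten-ball} with a slightly larger radius), so that the identity $F\circ i=\mathrm{id}\times 0_{k-n}$ is valid on the range of each $v_j$; and note that your appeal to the ``Lipschitz chain rule'' for the convergence $\psi\circ\tilde v_j\to v$ is exactly an instance of Lemma~\ref{lemma_Fconv} (with $n$ in place of $k$), since $\psi$ is $C^1$ with globally bounded Jacobian.
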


\section {Sobolev maps with boundary trace conditions}\label{section-f-sobolev}
Throughout this section $(M,g)$ will always denote an $m$-dimensional compact Riemannian manifold with boundary $\partial M \not= \emptyset$ whereas $(N,h)$ will be a complete $n$-dimensional Riemannian manifold without boundary and covered by a fixed global, normal coordinate chart centered at $q_0$. As above, using these coordinates, one identifies $N$, once and for all, with the Euclidean space $\rr^n$. In particular, according to the definitions in the previous section, the intrinsic Sobolev space $\Win$ is defined as $W^{1,p}(M,\rr^n)$. 

Now, the classical trace theory ensure the existence of a bounded linear operator
\[
\tr:W^{1,p}(M,\rr^n) \to L^p(\partial M, \rr^n).
\]
Therefore, given any map $f \in \Win$, its intrinsic boundary values $\tr f$ are well defined. Moreover, if $f$ is continuous, $\tr f = f|_{\partial M}$ is exactly the restriction of $f$ to the boundary set $\partial M$. It is also worth to recall that, given a second map $v \in \Win$, the condition $\tr v= \tr f$ can be stated equivalently by saying that
\[
v-f \in W^{1,p}_0(M,\rr^n),
\]
where $W^{1,p}_0(M,\rr^n)$ denotes the closure in the $W^{1,p}$ norm of the space $C^{\infty}_c(\mathrm{int}M,\rr^n)$.
Thus, we can define the intrinsic Sobolev space of $W^{1,p}$ maps with boundary values $f$ as
\[
\Winf = \{v\in \Win: \tr v = \tr f\}.
\]
Similarly, the corresponding extrinsic Sobolev space can be introduced:
\[
\Wexf = \{v\in \Wex: \tr \check{v} = \tr \check{f}\}.
\]
Observe that, since $f \in \Win \subset \Wex$ this definition makes sense.

With this preparation, the main result of the section states that for bounded maps, at least in case $f \in \Lip(M,N)$, these two classes coincide; see Corollary \ref{th_equiv-sob_3}. According to Theorem \ref{th_equiv-sob_1} this fact relies on the following
\begin{theorem}\label{th_equiv-sob_2}
Let $v: M \to N$ be a bounded map in $\Win$ or, equivalently, in  $\Wex$, and let $f \in \mathrm{Lip}(M,N)$. Then
\[
\tr v = \tr f \text{ in } W^{1,p}(M,\rr^n) \text{ if and only if } \tr \check{v}=\tr \check{f} \text{ in } W^{1,p}(M,\rr^k).
\]
\end{theorem}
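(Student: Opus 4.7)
I would prove Theorem \ref{th_equiv-sob_2} by directly transporting the extrinsic trace condition to the intrinsic one via the flattening diffeomorphism $F$ of Theorem \ref{th_flatten-ball}, using the fact that post-composition with a globally Lipschitz map commutes with taking the trace.

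\emph{Step 1 (Setup).} Since $M$ is compact and both $v$ (bounded by hypothesis) and $f$ (Lipschitz, hence continuous on the compact $M$) have bounded ranges, I pick $R_{0}>0$ so large that $v(x), f(x)\in B^{N}_{R_{0}}(q_{0})$ for a.e.\ $x\in M$. Because $(N,h)$ is covered by a global normal chart centered at $q_{0}$, the injectivity radius at $q_{0}$ is infinite and Theorem \ref{th_flatten-ball} produces a smooth diffeomorphism $F:\rr^{k}\to\rr^{k}$ satisfying $F\circ i=\mathrm{id}_{\rr^{n}}\times 0_{k-n}$ on $B^{N}_{R_{0}}(q_{0})$ and $F=\mathrm{id}_{\rr^{k}}$ outside some ball $B^{\rr^{k}}_{R_{1}}(0_{k})$. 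In particular $F$ and $F^{-1}$ have globally bounded Jacobians and are globally Lipschitz on $\rr^{k}$, and a.e.\ on $M$
\[
F\circ\check{v}=(v,0_{k-n})\qquad\text{and}\qquad F\circ\check{f}=(f,0_{k-n}).
\]

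\emph{Step 2 (Trace commutes with Lipschitz composition).} The main technical ingredient is the following fact: for any globally Lipschitz $\Phi:\rr^{k}\to\rr^{k}$ and any $u\in W^{1,p}(M,\rr^{k})$ one has $\Phi\circ u\in W^{1,p}(M,\rr^{k})$ with
\[
\tr(\Phi\circ u)=\Phi\circ\tr u\qquad\text{a.e.\ on }\partial M.
\]
To prove this, I approximate $u$ in $W^{1,p}$ by a sequence $u_{j}\in C^{\infty}(\bar M,\rr^{k})$. Lemma \ref{lemma_Fconv}, whose hypotheses are met since $\Phi$ has bounded Jacobian, yields (up to a subsequence) $\Phi\circ u_{j}\to\Phi\circ u$ in $W^{1,p}$, so by continuity of the trace $\tr(\Phi\circ u_{j})\to\tr(\Phi\circ u)$ in $L^{p}(\partial M,\rr^{k})$. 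On the other hand, each $u_{j}$ is continuous up to $\partial M$, hence $\tr(\Phi\circ u_{j})=\Phi\circ u_{j}|_{\partial M}=\Phi\circ\tr u_{j}$, and the right-hand side converges to $\Phi\circ\tr u$ in $L^{p}$ by the Lipschitz bound $|\Phi\circ\tr u_{j}-\Phi\circ\tr u|\leq L\,|\tr u_{j}-\tr u|$ together with the $L^{p}$-continuity of the trace.

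\emph{Step 3 (Conclusion).} Applying the trace-composition identity to $\Phi=F$ and $\Phi=F^{-1}$, which are mutually inverse bijections of $\rr^{k}$, gives the chain of equivalences
\begin{align*}
\tr\check{v}=\tr\check{f}
&\iff F\circ\tr\check{v}=F\circ\tr\check{f}\\
&\iff \tr(F\circ\check{v})=\tr(F\circ\check{f})\\
&\iff \tr(v,0_{k-n})=\tr(f,0_{k-n})\\
&\iff \tr v=\tr f,
\end{align*}
the last equivalence following since componentwise $\tr(w,0_{k-n})=(\tr w,0_{k-n})$. The main obstacle I anticipate is providing a careful self-contained proof of the trace-composition identity in Step 2; it is standard but requires coordinating the $W^{1,p}$-convergence coming from Lemma \ref{lemma_Fconv} with the $L^{p}$-convergence of the traces. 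Every other step is a direct consequence of the explicit properties of $F$.
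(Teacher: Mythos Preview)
Your proof is correct and follows essentially the same approach as the paper: both arguments hinge on the flattening diffeomorphism $F$ of Theorem \ref{th_flatten-ball}, Lemma \ref{lemma_Fconv}, and the continuity of the trace operator. The only difference is organizational: the paper works one implication at a time by constructing approximating sequences $v_{j}$ with $v_{j}|_{\partial M}=f|_{\partial M}$ exactly, whereas you first isolate the general identity $\tr(\Phi\circ u)=\Phi\circ\tr u$ for globally Lipschitz $\Phi$ and then apply it symmetrically to $F$ and $F^{-1}$, which makes the argument slightly more streamlined.
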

\begin{proof}
Let $R>0$ be so large that $v(M), f(M) \subset B^N_R(q_0)\subset \rr^n$.

First, we assume that
\[
\tr v = \tr f \text{ in } W^{1,p}(M,\rr^n) .
\]
Then, there exists a sequence $\{w_j\} \subset C^{\infty}_c(\mathrm{int}M,\rr^n)$ such that $w_j \to v-f$ in the space $W^{1,p}(M,\rr^n)$. In particular, the sequence of continuous functions $v_j := w_j + f$ satisfies
\[
v_j \to v \text{ in } \Win=W^{1,p}(M,\rr^n)
\]
and, up to extracting a subsequence, we can assume that the convergence is pointwise a.e. In particular, we can assume that $v_j(M) \subset \bar{B}^N_{2R}(q_0)$.\\
Let $F : \rr^k \to \rr^k$ be the corresponding bi-Lipschitz diffeomorphism defined in Theorem \ref{th_flatten-ball} with $R_0=2R$.
We note that $v_j \times 0_{k-n} \to v\times 0_{k-n} $ in $W^{1,p}(M,\rr^k)$. Therefore, by Lemma \ref{lemma_Fconv} applied to $F^{-1}$, we conclude that, up to extracting a subsequence, 
$$F^{-1}(v_j \times 0_{k-n}) \to F^{-1}(v\times 0_{k-n})\text{ in } W^{1,p}(M,\rr^k).$$ 
By the properties of $F$, this latter is equivalent to \[
\check{v}_j \to \check{v} \text{ in } W^{1,p}(M,\rr^k).
\]
Moreover, recalling that $f$ and $v_j$ are both continuous,
\[
\tr v_j = v_j|_{\partial M} =  f|_{\partial M}.
\]
Since for continuous functions

\[
\tr \circ\; i = i \circ \tr,
\]
using the continuity of the trace operator we  conclude that
\[
\tr \check{v} = \lim_{L^p} \tr \check{v}_j = \lim_{L^p} (i \circ \tr v_j) = \check{f}|_{\partial M},
\]
as required. \smallskip

The proof of the opposite implication is similar. Indeed, assume that
\[
\tr \check{v} = \tr \check{f} \text{ in } W^{1,p}(M,\rr^k) .
\]
and, as above, take a sequence $w_j \in W^{1,p}(M,\rr^k) \cap C^0(M,\rr^k)$ satisfying
\[
w_j  \to \check{v} \text{ in } W^{1,p}(M,\rr^k)
\]
and
\[
\tr w_j = w_j|_{\partial M} = \check{f}|_{\partial M}.
\]
By Lemma \ref{lemma_Fconv}, up to extracting a subsequence,
\[
F(w_j) \to F(\check{v}) = v \times 0_{k-n} \text{ in } W^{1,p}(M,\rr^k)
\]
and, moreover,
\[
F(w_j )|_{\partial M} = F (\check{f}|_{\partial M}) = f|_{\partial M} \times 0_{k-n}.
\]
Whence, using once again the continuity of trace operator we conclude that 
\[
\tr (v \times 0_{k-n}) = (\tr v ) \times 0_{k-n} = f|_{\partial M} \times 0_{k-n}.
\]
This completes the proof of the Theorem.
\end{proof}

As an immediate consequence we obtain the announced equivalence between the extrinsic and the intrinsic notion of bounded Sobolev maps with trace boundary values. 
\begin{corollary}\label{th_equiv-sob_3}
Let $v: M \to N$ be a bounded map and let $f \in \mathrm{Lip}(M,N)$. Then
\[
v \in \Wexf \text{ if and only if } v\in \Winf.
\]
\end{corollary}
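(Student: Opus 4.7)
The corollary is essentially a bookkeeping consequence of the two substantive results already established in this section, namely Theorem \ref{th_equiv-sob_1} and Theorem \ref{th_equiv-sob_2}, so my plan is simply to assemble them. The first preliminary observation I would make is that $f \in \mathrm{Lip}(M,N)$ is automatically bounded, because $M$ is compact and a Lipschitz map from a compact metric space has image of finite diameter; in particular there is an $R_0>0$ with $f(M), v(M) \subset B^N_{R_0}(q_0)$, so both maps fall under the hypotheses of the two theorems quoted.

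Next, I would unwind the definitions: by construction $\Winf = \{v \in \Win : \tr v = \tr f\}$ and $\Wexf = \{v \in \Wex : \tr \check v = \tr \check f\}$. The ``ambient'' membership condition $v \in \Win$ versus $v \in \Wex$ is handled by Theorem \ref{th_equiv-sob_1}, which, for bounded maps with range in a ball $B^N_{R_0}(q_0)$, gives the unconditional equivalence $v \in \Win \Leftrightarrow v \in \Wex$. The boundary condition, on the other hand, is handled by Theorem \ref{th_equiv-sob_2}: under the assumption that $v$ is bounded and $f$ is Lipschitz (hence bounded), we have $\tr v = \tr f$ in $W^{1,p}(M,\rr^n)$ if and only if $\tr \check v = \tr \check f$ in $W^{1,p}(M,\rr^k)$.

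Putting these together, if $v \in \Winf$ then $v \in \Win$, so by Theorem \ref{th_equiv-sob_1} also $v \in \Wex$, and the trace identity $\tr v = \tr f$ transfers to $\tr \check v = \tr \check f$ via Theorem \ref{th_equiv-sob_2}, yielding $v \in \Wexf$. The reverse implication is identical, reading the same two theorems from right to left. Since both theorems have already been established, I anticipate no obstacle at all: the corollary is just the logical conjunction of the ``spaces agree'' statement and the ``traces agree'' statement, and writing the proof is merely a matter of explicitly chaining the two equivalences.
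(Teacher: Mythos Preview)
Your proposal is correct and matches the paper's approach exactly: the paper presents the corollary as an immediate consequence of Theorem~\ref{th_equiv-sob_1} (equivalence of the ambient Sobolev spaces for bounded maps) together with Theorem~\ref{th_equiv-sob_2} (equivalence of the trace conditions), and your write-up simply spells out that combination.
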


\section{Convex exhaustion functions on glued manifolds}
Suppose we are given a (bounded) geodesic ball, where the squared distance function from the origin is smooth and convex. The aim of this section is to extend the Riemannian metric outside the ball in such a way that the resulting manifold is complete, isometric to the Euclidean space outside a larger ball and, finally, with the property that the squared distance function from the same origin remains smooth and strictly convex.

\medskip

Let $B^{\mathbb{R}^n}_R$ be the Euclidean ball of radius $0<R \leq +\infty$ and centered at the origin.  Assume that on $\ball$ it is given a Riemannian metric that, with respect to fixed local polar coordinates $(t,\theta^2,...,\theta^n)$ on $\rr^n$, has the expression
\begin{equation}\label{metric-symm}
h=dt\otimes dt+\sum_{i,l=2}^{n}\sigma_{il}(t,\theta^2,...,\theta^n) d\theta^i\otimes d\theta^l.
\end{equation}
Since we are assuming that the above expression holds for $0<t<R$ by the very definition of $h$ and the Gauss Lemma we are implicitly asserting that $t$ is the distance function from the origin and $\mathrm{inj}(0)=R$. In
particular, for each $\theta\in\mathbb{S}^{n-1}$, $\gamma\left(  r\right)
=\theta r$ are minimizing geodesics.

As a special case, if $R=+\infty$, $N^n = (\mathbb{R}^{n},h)$ is a complete manifold with a pole
$0\in\mathbb{R}^{n}$. The compact geodesic balls $\bar{B}_{R}^{N^n}\left(
0\right) $ of $N$, i.e., the sublevel sets $\left\{  t(x)\leq R\right\}  $, are
diffeomorphic to the corresponding compact Euclidean balls $\bar{B}%
_{R}^{\mathbb{R}^{n}}$.

\medskip

Suppose that the squared distance function from the origin is smooth and strictly convex on $(\ball,h)$. In order to give it a Euclidean end, we implement a construction in two steps.
First, in Theorem \ref{th_gluing_3} we will show how to deform the metric of $h$ outside a smaller ball of radius $R_1<R$ in such a way that the new
metric is complete and eventually rotationally symmetric (in fact hyperbolic), and that the
squared radial function remains a globally defined strictly convex function.  
This deformation allows us to win the lack of rotational symmetry of the metric in the ball at hand. The problem of  obtaining a new manifold with an Euclidean end and with a strictly convex squared distance function will thus reduce to a one-dimensional problem, solved in Theorem \ref{th_gluing_4}.

\begin{theorem}
\label{th_gluing_3}Let $\ball$ be endowed with the Riemannian metric $h$ that, in polar coordinates, takes the form (\ref{metric-symm}). Assume that the smooth function $t^2:\ball \to [0,R^2)$ is convex. Fix $0<R_{1} < R_{2}< R$. Then, there exist $k>0$ depending on $h$, $R_{1}$ and $R_{2}$,
and a manifold $(\hat N^{n},\hat h)
=(\mathbb{R}^{n},dt^{2}+\hat{\sigma}_{il}(t,\theta^2,...,\theta^n)d\theta^{i}\otimes d\theta^{l})$ such that:

\begin{enumerate}
\item[(i)] $\hat{\sigma}_{il}(t,\theta^2,...,\theta^n)d\theta^{i}\otimes d\theta^{l}={\sigma}_{il}(t,\theta^2,...,\theta^n)d\theta^{i}\otimes d\theta^{l}$ when $t<R_1$.

\item[(ii)] $\hat{\sigma}_{il}(t,\theta^2,...\theta^n)d\theta^{i}\otimes d\theta^{l}=k^{-1}\sinh
^{2}(\sqrt{k}t)g_{\mathbb S^{n-1}}(\theta^2,...,\theta^n)$ when $t \geq R_2$.

\item[(iii)] The function $t^{2}:\hat N \to\mathbb{R}$ is globally strictly convex  on $\hat N$.
\end{enumerate}
\end{theorem}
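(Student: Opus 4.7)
The plan is to modify the angular part of the metric smoothly on the annulus $\{R_1\le t\le R_2\}$, interpolating between the given $\sigma_{il}(t,\theta)\,d\theta^i\otimes d\theta^l$ and the rescaled round form $k^{-1}\sinh^2(\sqrt{k}\,t)g_{\mathbb{S}^{n-1}}$, while keeping the radial piece $dt\otimes dt$ intact. The starting point is the observation that, for any warped metric of the form \eqref{metric-symm}, using $\nabla t=\partial_t$, $\nabla(t^2)=2t\partial_t$ and the Christoffel symbols $\Gamma^t_{ij}=-\tfrac{1}{2}\partial_t\sigma_{ij}$, one computes
\begin{equation*}
\Hess_h(t^2) = 2\,dt\otimes dt + t\,\partial_t\sigma_{il}\,d\theta^i\otimes d\theta^l.
\end{equation*}
Hence for $t>0$ strict convexity of $t^2$ is equivalent to positive definiteness of the angular $2$-tensor $\partial_t\sigma_{il}\,d\theta^i d\theta^l$, and the analogous formula with $\hat\sigma$ in place of $\sigma$ will control (iii).

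First I choose $k=k(h,R_1,R_2)>0$ large enough that the bilinear form
\begin{equation*}
k^{-1}\sinh^2(\sqrt{k}\,t)g_{il}(\theta) - \sigma_{il}(t,\theta)
\end{equation*}
is positive definite on $T_\theta\mathbb{S}^{n-1}$ for every $(t,\theta)\in[R_1,R_2]\times\mathbb{S}^{n-1}$; this is possible since $\sigma$ is bounded on this compact set while $k^{-1}\sinh^2(\sqrt{k}\,t)\to+\infty$ uniformly on $[R_1,R_2]$ as $k\to\infty$. Next I pick a smooth non-decreasing cutoff $\lambda:\R\to[0,1]$ with $\lambda\equiv 0$ on $(-\infty,R_1]$ and $\lambda\equiv 1$ on $[R_2,+\infty)$, and define
\begin{equation*}
\hat\sigma_{il}(t,\theta) := (1-\lambda(t))\sigma_{il}(t,\theta) + \lambda(t)\,k^{-1}\sinh^2(\sqrt{k}\,t)g_{il}(\theta),
\end{equation*}
with the convention that the first summand is zero for $t\ge R_2$ (consistent since $\lambda\equiv 1$ there). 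The metric $\hat h := dt\otimes dt + \hat\sigma_{il}\,d\theta^i\otimes d\theta^l$ is smooth on $\R^n$ (the origin is unaffected), coincides with $h$ for $t\le R_1$, equals the hyperbolic metric of constant curvature $-k$ for $t\ge R_2$, and is therefore complete; so properties (i) and (ii) hold by construction.

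For (iii), differentiation yields
\begin{equation*}
\partial_t\hat\sigma_{il} = (1-\lambda)\partial_t\sigma_{il} + \lambda\,k^{-1/2}\sinh(2\sqrt{k}\,t)g_{il} + \lambda'(t)\bigl[k^{-1}\sinh^2(\sqrt{k}\,t)g_{il} - \sigma_{il}\bigr].
\end{equation*}
Each summand is positive semi-definite on angular directions: the first by the strict convexity of $t^2$ on $(\ball,h)$ (which gives $\partial_t\sigma>0$), the second trivially since $\sinh(2\sqrt{k}\,t)>0$ for $t>0$, and the third by the choice of $k$ combined with $\lambda'\ge 0$. Moreover, at every $t>0$ at least one of these summands is strictly positive definite: the first whenever $\lambda(t)<1$ and the second whenever $\lambda(t)>0$, and together these two conditions cover all of $(0,+\infty)$. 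Substituting into the Hessian formula for $\hat h$, and handling $t=0$ via the unmodified normal coordinates of $h$ (where $\Hess(t^2)=2I$), gives strict convexity of $t^2$ on all of $\hat N$. The main obstacle is the quantitative choice of $k$: the mixing term $\lambda'(t)[k^{-1}\sinh^2(\sqrt{k}\,t)g_{il}-\sigma_{il}]$ is indefinite a priori, and making it positive definite is the only step that really uses quantitative information about $h$, $R_1$ and $R_2$. The rest is standard bookkeeping at the gluing interfaces.
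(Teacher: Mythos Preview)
Your argument is correct and follows essentially the same route as the paper: a monotone cutoff interpolation of the angular metric between $\sigma$ and the hyperbolic form, the Hessian formula $\Hess(t^2)=2\,dt\otimes dt+t\,\partial_t\hat\sigma_{il}\,d\theta^i d\theta^l$ (the paper states the equivalent formula for $\Hess t$ as Lemma~\ref{lem_hess_rad}), and the choice of $k$ large so that $k^{-1}\sinh^2(\sqrt{k}\,t)g_{\mathbb S^{n-1}}\ge\sigma$ on the compact annulus, which makes the $\lambda'$-term nonnegative. The only slip is that you invoke \emph{strict} convexity of $t^2$ on $(\ball,h)$ to get $\partial_t\sigma>0$, whereas the stated hypothesis is mere convexity; this is the same tension present in the paper's statement and is harmless in the intended application, where strict convexity is assumed.
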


\begin{proof}
Consider a smooth partition of unity $\phi_\sigma,\phi_h\in C^\infty((0,+\infty))$ such that
\[
0\leq\phi_\sigma(t)\leq 1,\quad\phi_\sigma|_{(0,R_1]}\equiv 1,\quad\phi_\sigma|_{[R_2,\infty)}\equiv 0,\quad\phi_\sigma' \leq 0
\]
and
\begin{equation}\label{sum}
\phi_\sigma(t)+\phi_h(t)=1,\quad\forall t\in(0,+\infty).
\end{equation}
Define 

\[
\hat{\sigma}_{il}(t,\theta^2,...,\theta^n):=\phi_\sigma(t)\sigma_{il}(t,\theta^2,...,\theta^n)+\phi_h(t)h^{(k)}_{il}(t,\theta^2,...,\theta^n),
\]
where
$$h^{(k)}_{il}(\theta^2,...,\theta^n)d\theta^{i}\otimes d\theta^{l}:= k^{-1/2} \sinh(\sqrt{k}t)(g_{\mathbb S^{n-1}})_{il}(\theta^2,...,\theta^n).$$
Note that $(\hat N^{n},\hat h)
=(\mathbb{R}^{n},dt^{2}+\hat{\sigma}_{il}(t,\theta^2,...,\theta^n)d\theta^{i}\otimes d\theta^{l})$ is a well defined $n$-dimensional Riemannian manifold and conditions $(i)$ and $(ii)$ of the statement are automatically satisfied by construction.

Consider the function $t\in C^{\infty}(\rr^n\setminus\{0\})$. By the assumption on the metric $h$, and since $(\hh^n_k,g_{\hh^n_k})=(\mathbb{R}^{n},dt^{2}+h^{(k)}_{il}(t,\theta^2,...,\theta^n)d\theta^{i}\otimes d\theta^{l})$ is a Cartan-Hadamard manifold, we have that $t$ is a convex function (strictly convex off the radial direction) on both $B_{R^1}^{\hat N^{n}}$ and $\hat N^{n}\setminus B_{R^2}^{\hat N^{n}}$. To prove the theorem we will show that $t:\hat N^{n}\to \rr$ is convex (strictly convex off the radial direction). This will imply that $t^2:\hat N^{n}\to\rr$ is strictly convex and, because of $(i)$, smooth on all of $N_\hj$. The following formula, which is a consequence of the polar structure of the metric at hand, will be applied to $(\ball,h)$, $(\hat N^{n},\hat h)$ and $(\hh^n_k,g_{\hh^n_k}).$

\begin{lemma}\label{lem_hess_rad}
Let $\ball$ be endowed with the Riemannian metric $h$ that, in polar coordinates, writes as in (\ref{metric-symm}).
Then on $(\ball\setminus\{0\},h)$ it holds
\begin{equation*}
\Hess t|_{(t,\theta^2,...,\theta^n)}(X,X)= \frac 12 \sum_{i,l=2}^nX^iX^l\frac{\partial}{\partial t} \sigma_{il}(t,\theta^2,...,\theta^n),
\end{equation*}
for all vector fields $X=X^{0}\frac{\partial}{\partial t}+\sum_{i=2}^nX^i\frac{\partial}{\partial \theta^i}$ on $\rr^n$.
\end{lemma}

\noindent According to Lemma \ref{lem_hess_rad} and recalling \eqref{sum}, we have
\begin{align}\label{hess-mix}
\Hess t|_{(t,\theta^2,...,\theta^n)}(X,X)&= \phi_\sigma(t) \left[\frac 12 X^iX^l\partial_t \sigma_{il}(t,\theta^2,...,\theta^n)\right] \\
&+ \phi_h(t) \left[\frac 12 X^iX^l\partial_t h^{(k)}_{il}(t,\theta^2,...,\theta^n)\right] \nonumber\\
&+  \frac 12\phi'_h(t) X^iX^l\left[h^{(k)}_{il}(t,\theta^2,...,\theta^n)-\sigma_{il}(t,\theta^2,...,\theta^n)\right].\nonumber
\end{align}
The first two terms on RHS of \eqref{hess-mix} are positive, since they correspond to a positive cut-off function times the hessian of $t$ on $(\ball,h)$ and $\hh^n_k$ respectively.
Since $\phi'_h\geq 0$, in order to conclude the proof it's enough to show that for $k$ large enough it holds
\[
X^iX^lh^{(k)}_{il}(t,\theta^2,...,\theta^n)\geq X^iX^l\sigma_{il}(t,\theta^2,...,\theta^n),\quad\forall (t,\theta^2,...,\theta^n)\in B_{R_2}^{\rr^n}\setminus B_{R_1}^{\rr^n}.
\]
Note that this latter makes sense since both members of the inequality do not depends on the (local) coordinate system. Since $B_{R_2}^{\rr^n}\setminus B_{R_1}^{\rr^n}$ is relatively compact, there exists a constant $c_2=c_2(R_1,R_2,h)>0$ such that for all vector fields $X$,
\[
X^iX^lh^{(1)}_{il}(t,\theta^2,...,\theta^n)\geq c_2 X^iX^l\sigma_{il}(t,\theta^2,...,\theta^n),\quad\forall (t,\theta^2,...,\theta^n)\in B_{R_2}^{\rr^n}\setminus B_{R_1}^{\rr^n}.
\]
Noticing that
\[
h^{(k)}_{il} = \frac{\sinh^2\left(\sqrt kt\right)}{k\sinh^2 t} h^{(1)}_{il},
\]
it is enough to choose $k$ in such a way that
\[
\sinh^2\left(\sqrt kR_1\right)\geq c^{-1}_2k\sinh^2 R_1.
\]
\end{proof}

Now, we will show how Theorem \ref{th_gluing_3} can be easily improved in
order to give to $N$ a Euclidean end.

\begin{theorem}
[Euclidean deformation]\label{th_gluing_4}
Let $\ball$ be endowed with the Riemannian metric $h$ that, in polar coordinates, writes as in (\ref{metric-symm}). Assume that the smooth function $t^2:\ball \to [0,R^2)$ is convex.
Fix $0<R_{1}<R$. Then, there exist
a radius $R_{4}>R_{1}$ and a manifold $(\tilde N^n,\tilde h)=(\mathbb{R}^{n},dt^{2}+\tilde{\sigma}
_{il}(t,\theta^2,...,\theta^n)d\theta^{i}\otimes d\theta^{l})$ such that:

\begin{enumerate}
\item[(i)] $\tilde{\sigma}_{il}(t,\theta^2,...,\theta^n)d\theta^{i}\otimes d\theta^{l}
={\sigma}_{il}(t,\theta^2,...,\theta^n)d\theta^{i}\otimes d\theta^{l}$ when $t<R_1$.

\item[(ii)] $\tilde{\sigma}_{il}(t,\theta^2,...,\theta^n)d\theta^{i}\otimes d\theta^{l}=t^2g_{\mathbb S^{n-1}}(\theta^2,...,\theta^n)$ when $t \geq R_4$.

\item[(iii)] The squared distance function $t^{2}:\tilde N^n\rightarrow\mathbb{R}$
is smooth and strictly convex on
$\tilde N^n$.
\end{enumerate}
\noindent In particular $\tilde N^n\setminus B_{R_4}^{\tilde N^n}$ is isometric to $\rr^n\setminus\ballr{R_4}$.
\end{theorem}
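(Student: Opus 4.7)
The strategy is to apply Theorem \ref{th_gluing_3} first, reducing the full $n$-dimensional problem to a purely one-dimensional interpolation of warping functions. Theorem \ref{th_gluing_3} already allows us to replace $h$ outside $B_{R_1}^{\mathbb{R}^n}$ by a rotationally symmetric hyperbolic metric while preserving strict convexity of $t^2$; what remains is to further deform the warping function, in the region where the metric is already rotationally symmetric, so that for $t$ large it equals the Euclidean warping $\tau_{\mathrm{eucl}}(t)=t$.

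Concretely, fix auxiliary radii $R_1<R_2<R$ and apply Theorem \ref{th_gluing_3} to obtain $(\hat N^n,\hat h)$ with $\hat h=h$ on $\{t<R_1\}$ and $\hat h=dt^{2}+\tau_0(t)^2 g_{\mathbb{S}^{n-1}}$ on $\{t\geq R_2\}$, where $\tau_0(t):=k^{-1/2}\sinh(\sqrt{k}\,t)$, and on which $t^2$ is smooth and strictly convex. Pick $R_3>R_2$, set $a:=\tau_0(R_3)$, and choose $R_4>a$ (any sufficiently large value, e.g.\ $R_4:=2(a+R_3)$). The plan is to produce a smooth function $\tilde\tau\colon[R_3,+\infty)\to(0,+\infty)$ satisfying (a) $\tilde\tau\equiv\tau_0$ on $[R_3,R_3+\varepsilon]$ for some $\varepsilon>0$, (b) $\tilde\tau(t)=t$ for every $t\geq R_4$, and (c) $\tilde\tau'(t)>0$ on $[R_3,R_4]$. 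The new metric is then defined by $\tilde h:=\hat h$ for $t\leq R_3$ and $\tilde h:=dt^2+\tilde\tau(t)^2 g_{\mathbb{S}^{n-1}}$ for $t\geq R_3$, with smooth matching across $t=R_3$ by (a). I would build $\tilde\tau$ via its derivative: choose a smooth $\psi\colon[R_3,+\infty)\to(0,+\infty)$ with $\psi=\tau_0'$ on $[R_3,R_3+\varepsilon]$, $\psi\equiv 1$ on $[R_4,+\infty)$, and $\int_{R_3}^{R_4}\psi(s)\,ds=R_4-a$; then set $\tilde\tau(t):=a+\int_{R_3}^{t}\psi(s)\,ds$. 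The integral constraint is compatible with the positivity of $\psi$ because $R_4-a>0$, and the smooth matching of the prescribed endpoint values $\cosh(\sqrt{k}\,R_3)>0$ and $1>0$ of $\psi$ is standard.

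Properties (i) and (ii) of the statement then follow by construction. For (iii), on $\{t\leq R_3\}$ the metric equals $\hat h$ and the convexity of $t^2$ is inherited from Theorem \ref{th_gluing_3}, while on $\{t>R_3\}$ the warped-product formula in Lemma \ref{lem_hess_rad} yields
\[
\Hess t^2 = 2\,dt\otimes dt + 2\,t\,\tilde\tau(t)\,\tilde\tau'(t)\,g_{\mathbb{S}^{n-1}},
\]
which is positive definite thanks to $\tilde\tau,\tilde\tau'>0$; smoothness is immediate from (a) and the explicit warped-product form. Finally, since $\tilde\tau(t)=t$ for $t\geq R_4$, the end $\tilde N^n\setminus B_{R_4}^{\tilde N^n}$ is isometric to $\mathbb{R}^n\setminus B_{R_4}^{\mathbb{R}^n}$. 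The main obstacle is the construction of $\tilde\tau$ itself: because $\tau_0(R_3)=a$ may be considerably larger than $R_3$, one must ensure that the warping can smoothly decrease its growth rate (from $\cosh(\sqrt{k}\,R_3)>1$ to $1$) while the function itself keeps strictly increasing to reach $\tilde\tau(R_4)=R_4$. The freedom to take $R_4$ arbitrarily large is exactly what permits this one-dimensional interpolation to be carried out under the positivity constraint $\tilde\tau'>0$.
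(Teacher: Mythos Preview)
Your proof is correct and follows essentially the same approach as the paper: first apply Theorem~\ref{th_gluing_3} to make the metric rotationally symmetric (hyperbolic) for $t\geq R_2$, then interpolate the warping function between $k^{-1/2}\sinh(\sqrt{k}t)$ and $t$ on $[R_3,R_4]$ by a smooth strictly increasing function, and finally invoke Lemma~\ref{lem_hess_rad} to verify strict convexity of $t^2$. The only difference is that the paper simply asserts the existence of such an increasing interpolant, whereas you spell out an explicit construction via its derivative $\psi$ together with the integral constraint $\int_{R_3}^{R_4}\psi=R_4-a$; this extra detail is fine and justified.
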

\begin{proof}
Let $\hat N^{n}$ be the manifold obtained thanks to Theorem \ref{th_gluing_3} and set $\hat{\sigma}(t):= k^{-1/2} \sinh
(\sqrt{k}t)$. Fix $R_{3}>R_{2}$ and $R_{4}>\hat\sigma(R_3)$. We construct a new smooth increasing function $\tilde{\sigma}%
:[R_{2},\infty)\rightarrow\mathbb{R}$ such that $\tilde{\sigma}(t)=\hat{\sigma}(t)$
for $R_{2}<t<R_{3}$ and $\tilde \sigma(t)=t$ for $t>R_{4}$. Finally we define
\[
\tilde{\sigma}_{il}(t,\theta^2,...,\theta^n)d\theta^{i}\otimes d\theta^{l}:=%
\begin{cases}
\hat{\sigma}_{il}(t,\theta^2,...,\theta^n)d\theta^{i}\otimes d\theta^{l}, & 0<t<R_{3},\\
\tilde{\sigma}^{2}(t)g_{\mathbb S^{n-1}}, & t>R_{2}.
\end{cases}
\]
Consider the function $t^{2}:\tilde N^{n}\rightarrow\mathbb{R}$. We already know from Theorem \ref{th_gluing_3} that $t^{2}$ is strictly convex when $t(x)<R_3$. On the other hand, since $\tilde{\sigma}$ is a
smooth increasing function on $[R_{2},\infty)$, according to Lemma \ref{lem_hess_rad} we have that $t^2$ is strictly convex also for $t(x)>R_2$, hence on all of $\tilde N^{n}$. 
\end{proof}

\section{Proof of the main theorem}
For the ease of notation, throughout all the proof we will keep
the same set of indeces each time we will extract a subsequence from a given
sequence. Moreover, given two functions $f,g:X \to \rr_{\geq 0}$ we write $f \approx g$ to mean that there exists a constant $c\geq1$ such that $c^{-1} \cdot f(x) \leq g(x) \leq c \cdot f(x)$, for all $x \in X$. Let us begin by observing the following lemma, which summarizes some useful standard results from the calculus of variations (see for instance \cite{Da}).

\begin{lemma}\label{lemma_propertyenergy}
Let (M,g) be a compact $m$-dimensional Riemannian manifold possibly with boundary $\partial M \not=\emptyset$ and let $h$ be a complete Riemannian metric on $\rr^n$ such that $c^{-1}\cdot g_{\rr^n}\leq h \leq c\cdot g_{\rr^n}$ in the sense of quadratic forms for some constant $c\geq1$. For every smooth map $u : (M,g) \to (\rr^n ,h)$ consider the energy functional $E_2 (u) = \frac{1}{2} \int_M \| du \|^2_{\mathrm{HS}} d\vol$ where, in local coordinates,
$\| du \|^2_{\mathrm{HS}}= g^{\alpha \beta}(x)\partial_\alpha u^i \partial_\beta u^j h_{ij}(u).$
Then:
	\begin{enumerate}
	\item [(a)] $E_2$ is well defined and finite on $W^{1,2}(M,\rr^n)$.
	\item [(b)] $E_2$ is continuous on $W^{1,2}(M,\rr^n)$.
	\item [(c)] $E_2$ is lower semicontinuous with respect to the weak convergence in $W^{1,2}(M,\rr^n)$.
	\end{enumerate}
\end{lemma}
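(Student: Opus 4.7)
The plan is to treat the three assertions as standard consequences of the fact that the energy density
$$F(x,u,p) = g^{\alpha\beta}(x)\, p^i_\alpha p^j_\beta\, h_{ij}(u)$$
is a Carathéodory integrand which is quadratic (hence convex) in $p$ and satisfies, thanks to the bilateral control $c^{-1} g_{\rr^n} \le h \le c\, g_{\rr^n}$ and compactness of $M$, a two-sided bound
$$C^{-1}|p|^2 \le F(x,u,p) \le C\,|p|^2, \qquad \forall\,(x,u,p),$$
where $|p|$ is the Hilbert--Schmidt Euclidean norm of the Jacobian matrix $p=(p_\alpha^i)$ and $C=C(M,g,c)>0$. (Here the finiteness of $g^{\alpha\beta}$ comes from $M$ compact via a finite atlas, or more invariantly by working in a local $g$-orthonormal frame.) Part $(a)$ is then a one-line check: for any $u\in W^{1,2}(M,\rr^n)$ we have $0 \le E_2(u) \le (C/2)\|Du\|_{L^2}^2 < \infty$.

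For $(b)$, suppose $u_k \to u$ strongly in $W^{1,2}(M,\rr^n)$. After passing to a subsequence (and invoking the usual ``every subsequence has a further subsequence'' principle to recover convergence of the full sequence at the end) we may assume $u_k \to u$ almost everywhere and $Du_k\to Du$ in $L^2(M,\rr^{n\times m})$. Decompose
$$e(u_k)-e(u) = g^{\alpha\beta}\bigl(\partial_\alpha u_k^i \partial_\beta u_k^j - \partial_\alpha u^i \partial_\beta u^j\bigr) h_{ij}(u_k) + g^{\alpha\beta}\partial_\alpha u^i \partial_\beta u^j \bigl(h_{ij}(u_k)-h_{ij}(u)\bigr).$$
The first summand converges to zero in $L^1(M)$ because $|h_{ij}(\cdot)|\le c$ uniformly on $\rr^n$ while the bilinear combination $\partial_\alpha u_k^i \partial_\beta u_k^j - \partial_\alpha u^i \partial_\beta u^j$ converges to zero in $L^1(M)$ by Cauchy--Schwarz combined with the $L^2$-convergence of the partial derivatives. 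The second summand vanishes in $L^1(M)$ by Lebesgue dominated convergence: the factor $g^{\alpha\beta}\partial_\alpha u^i\partial_\beta u^j$ is a fixed $L^1$ function, $h_{ij}(u_k) \to h_{ij}(u)$ almost everywhere by continuity of $h$ and the a.e.\ convergence of $u_k$, and $|h_{ij}(u_k)-h_{ij}(u)|\le 2c$ provides the uniform dominator.

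For $(c)$, the integrand $F$ is a Carathéodory function which is nonnegative and quadratic in $p$; convexity in $p$ is immediate because, for each fixed $(x,u)$, $F(x,u,\,\cdot\,)$ is the positive semidefinite quadratic form associated to the tensor product of the positive definite forms $(g^{\alpha\beta}(x))$ and $(h_{ij}(u))$. The classical sequential weak lower semicontinuity theorem of Tonelli--Serrin--Ioffe, quoted in the reference \cite{Da} of the statement, applies directly and yields $E_2(u)\le \liminf_k E_2(u_k)$ for every weakly convergent sequence $u_k \rightharpoonup u$ in $W^{1,2}(M,\rr^n)$.

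The only step that is more than bookkeeping is $(b)$: a priori $h_{ij}$ need not be globally Lipschitz on $\rr^n$, so one cannot simply estimate $|h_{ij}(u_k)-h_{ij}(u)|$ by a constant times $|u_k-u|$. What rescues the argument is precisely the hypothesis that $h$ is uniformly equivalent to the Euclidean metric, which provides the crucial uniform bound $|h_{ij}|\le c$ needed to apply dominated convergence to the ``$h$-variation'' term, regardless of whether the sequence $\{u_k\}$ stays bounded in $\rr^n$.
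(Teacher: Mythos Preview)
Your argument is correct. The paper does not actually supply a proof of this lemma: it is stated as a summary of ``useful standard results from the calculus of variations'' with a bare citation to Dacorogna \cite{Da}. Your write-up is therefore not so much an alternative as the missing details, and the ingredients you use---the two-sided growth bound coming from $c^{-1}g_{\rr^n}\le h\le c\,g_{\rr^n}$ and the compactness of $M$, the dominated-convergence argument for (b) exploiting the uniform bound $|h_{ij}|\le c$, and the Tonelli--Serrin--Ioffe theorem for (c)---are exactly the standard ones the cited reference would supply. The one substantive observation you make, namely that the uniform equivalence $h\approx g_{\rr^n}$ is precisely what allows (b) to go through without any boundedness assumption on the $u_k$, is a useful remark that the paper leaves implicit.
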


Thanks to Proposition 2.3 in \cite{PV-pDirichlet-compact}, without loss of generality we can suppose that $f\in \Lip(M,N)$.  We fix $R_1<R$ such that $f(M)\subset B^N_{R_1}$ and we apply Theorem \ref{th_gluing_4} to get the complete manifold $(\tilde N^n, \tilde h)$ which is diffeomorphic to $\rr^n$ and Euclidean outside a larger ball $B_{R_4}$ (from now on it is understood that balls are considered with respect to the metric $\tilde h$).  Since the smooth metric $\tilde h$ of
$\tilde N^n$ is Euclidean outside $B_{R_{4}}$, we have $ \dist_{\rr^n} \approx \dist_{\tilde N^n}$ and,
in particular, $f \in \mathrm{Lip}(M,\tilde N^n)$. Moreover, according to Lemma \ref{lemma_propertyenergy}, for every $u \in W^{1,2}_\mathrm{in} (M,\tilde N^n)$ we have $\tilde{E}_{2}(u) < +\infty$ where
\[
\tilde{E}_{2}(u)=\frac{1}{2}\int_{M}\|du\|_{\mathrm{HS}(M,\tilde N^n)}^{2}d\vol.
\]
Therefore the set $W^{1,2}_\mathrm{in} (M,\tilde N^n,f)$ is non-empty and
\[
\mathcal{I}_{f}:=\inf_{u\in W^{1,2}_\mathrm{in} (M,\tilde N^n,f)}\tilde{E}_{2}(u)<+\infty.
\]
Let $\{v_{j}\}_{j=1}^{\infty}\subset W^{1,2}_\mathrm{in} (M,\tilde N^n,f)$ be a sequence minimizing
the energy in this Sobolev space, i.e. $\tilde{E}_{2}(v_{j})\rightarrow\mathcal{I}_{f}$
as $j\rightarrow\infty$. Choosing a subsequence if necessary, we can suppose
$\tilde{E}_{2}(v_{j})<2\mathcal{I}_{f}$ for all $j$. Since $\Vert Dv_{j}\Vert_{L^{2}(M,\mathbb{R}^{n})}<2c\mathcal{I}_{f}$
for all $j$ and, moreover, all maps in $W^{1,2}_\mathrm{in} (M,N_{\tilde\sigma},f)$ have the same
boundary datutm $f$, a uniform Poincar\'{e} inequality holds. 
Namely, there exists a constant $C=C(M,f,\mathcal{I}_{f})>0$
independent of $j$ such that
$\|v_j\|_{L^2(M,\rr^n)} \leq C$,
for all $j$. Then $\{v_{j}\}_{j=1}^{\infty}$ is bounded in $W^{1,2}%
(M,\mathbb{R}^{n})$ and, up to choosing a subsequence, $v_{j}$ converges to
some $v\in W^{1,2}(M,\mathbb{R}^{n})$ weakly in $W^{1,2}(M,\mathbb{R}^{n})$.
Since $M$ is compact, $\{v_{j}\}_{j=1}^{\infty}$ is bounded in $W^{1,p^{\prime
}}(M,\mathbb{R}^{n})$ for some $1<p^{\prime}\leq 2$. By the Kondrachov theorem, $v_{j}$ converges
strongly in $L^{s}(M,\mathbb{R}^{n})$ for any $1<s<(mp^{\prime})/(m-p^{\prime
})$ and hence pointwise almost everywhere. Since $v_{j}-f\in W_{0}%
^{1,2}(M,\mathbb{R}^{n})$ for all $j$, the weak limit $v-f\in W_{0}%
^{1,2}(M,\mathbb{R}^{n})$.
Then, by property (c) of Lemma \ref{lemma_propertyenergy},
\begin{equation}
\tilde{E}_{2}(v)\leq\liminf_{j\rightarrow\infty}\tilde{E}_{2}(v_{j}). \label{lsc}%
\end{equation}
It follows from (\ref{lsc}) and the uniform boundedness of $\tilde{E}_{2}(v_{j})$
that
\[
\mathcal{I}_{f}\leq \tilde{E}_{2}(v)\leq\liminf_{j\rightarrow\infty}\tilde{E}_{2}(v
_j)=\mathcal{I}%
_{f},
\]
so that $\tilde{E}_{2}(v)=\mathcal{I}_{f}$, i.e. $v$ minimizes $\tilde{E}_{2}$ in
$W^{1,2}_\mathrm{in} (M,\tilde N^n,f)$.\\

Now, we fix any $R_5 > R_4$ and we  claim that we can choose the minimum $v$ for $\tilde{E}_{2}$ in
$W^{1,2}_\mathrm{in} (M,\tilde N^n,f)$ such that $$v(M)\subset \bar B_{R_5}.$$  Indeed, this follows by replacing a given minimum with its radial projection onto the ball $\bar{B}_{R_5}$ as shown in the next
\begin{lemma}
Let (M,g) be a compact manifold possibly with boundary $\partial M \not=\emptyset$ and let $h$ be a complete Riemannian metric on $\rr^n$ such that $h = g_{\rr^n}$ on $\rr^n \setminus B^{\rr^n}_R$. Set $E_2 (u) = \frac{1}{2} \int_M \| du \|^2_{\mathrm{HS}} d\vol$. Fix $R' > R$ and let $\Pi : \rr^n \to \rr^n$ be the radial projection onto $B^{\rr^n}_{R'}$, namely, $\Pi(x) = R' x/|x|$ if $|x| \geq R'$ and $\Pi(x) = x$ if $|x|<R'$. Then, for any $v \in W^{1,2}(M,\rr^n)$,
\[
E_2(\Pi(v)) \leq E_2(v).
\]
\end{lemma}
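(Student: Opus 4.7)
The plan is to reduce the integral inequality to a pointwise estimate on the HS-norm of the differentials, exploiting the fact that $R' > R$ forces $h = g_{\rr^n}$ at every point relevant to the computation. Split the domain as $A = \{x \in M : |v(x)| \leq R'\}$ and $B = \{x \in M : |v(x)| > R'\}$. On $A$ the map $\Pi$ acts as the identity, so $\Pi(v) = v$ and (by the standard chain rule for compositions of Sobolev maps with Lipschitz maps, together with the fact that the weak derivative of a Sobolev function vanishes a.e. on its level sets) $D(\Pi\circ v) = Dv$ a.e. on $A$. In particular, the integral contribution to $E_2$ over $A$ is unchanged. It therefore suffices to prove the pointwise bound $\|d(\Pi\circ v)\|_{\mathrm{HS}} \leq \|dv\|_{\mathrm{HS}}$ a.e. on $B$.

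The crucial observation is that on $B$ both $v(x)$ and $\Pi(v(x))$ lie outside $B^{\rr^n}_R$: indeed $|v(x)| > R' > R$ and $|\Pi(v(x))| = R' > R$. Consequently the target metric $h$ evaluated at either point coincides with the Euclidean metric $g_{\rr^n}$, and the HS-norms entering $E_2(v)$ and $E_2(\Pi\circ v)$ at such $x$ are both computed with the flat Euclidean target. Hence the question is reduced to a purely Euclidean pointwise inequality for the linear map $D\Pi(y)$ at points $y$ with $|y| > R'$.

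For $|y| > R'$ one computes directly
\[
D\Pi(y) = \frac{R'}{|y|}\left(I - \frac{y\otimes y}{|y|^{2}}\right),
\]
which is $R'/|y|$ times the orthogonal projection onto the hyperplane $y^{\perp}$. Its operator norm is therefore at most $R'/|y| \leq 1$. Applying the standard submultiplicativity $|AB|_{\mathrm{HS}} \leq \|A\|_{\mathrm{op}}\,|B|_{\mathrm{HS}}$ to the chain rule identity $D(\Pi\circ v) = D\Pi(v)\cdot Dv$ (valid a.e. since $\Pi$ is globally Lipschitz and $v \in W^{1,2}$), we conclude
\[
\|d(\Pi\circ v)(x)\|_{\mathrm{HS}} \leq \|dv(x)\|_{\mathrm{HS}} \qquad \text{a.e. } x \in B.
\]
Integrating separately over $A$ and $B$ yields $E_{2}(\Pi(v)) \leq E_{2}(v)$. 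The only mildly delicate point is the a.e.\ chain rule across the interface $\{|v| = R'\}$, but this is handled by the Stampacchia-type vanishing of weak derivatives on level sets, so no subtle regularization of $\Pi$ at $|y| = R'$ is required.
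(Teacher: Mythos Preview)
Your proof is correct, but it follows a different route from the paper's. The paper argues by approximation: choose smooth $w_k \to v$ in $W^{1,2}(M,\rr^n)$, observe that for smooth $w_k$ one has the pointwise bound $\|d(\Pi\circ w_k)\|_{\mathrm{HS}} \leq \|dw_k\|_{\mathrm{HS}}$ (by the same case split you use), and then pass to the limit via the strong continuity of $E_2$ on the $v$-side and its weak lower semicontinuity on the $\Pi(v)$-side (both properties having been recorded in the preceding lemma on the energy functional). Your argument instead works directly with the Sobolev map $v$, invoking the a.e.\ chain rule for the composition of a Lipschitz map with a $W^{1,2}$ map together with the Stampacchia-type fact that weak derivatives of two Sobolev functions agree a.e.\ on the set where the functions coincide. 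Your route is more elementary in that it bypasses the approximation and semicontinuity machinery; the paper's route is cleaner in that it sidesteps the chain-rule bookkeeping at the interface $\{|v|=R'\}$ by working only with smooth maps and then appealing to functional-analytic properties of $E_2$ already established. Both lead to the same pointwise comparison and the same conclusion.
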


\begin{remark}
\rm{
Note that, if $v \in W^{1,2}(M,\rr^n,f)$ with $f(M) \subset \bar{B}^{\rr^n}_R$, then also $\Pi(v) \in W^{1,2}(M,\rr^n,f)$.
}
\end{remark}

\begin{proof}
Let $w_{k}:M \to \rr^n$ be a sequence of smooth maps satisfying
\[
w_{k}\rightarrow v \text{ in }W^{1,2}(M,\rr^n) \text{ and pointwise a.e on } M.
\]
It follows by property (b) of Lemma \ref{lemma_propertyenergy} that
\begin{equation}\label{strongconv}
E_2(w_k) \to E_2(v).
\end{equation}
Let $\bar{w}_{k}:=\Pi\circ w_{k}$. Since $\Pi$ is $1$-Lipschitz, the sequence $\bar{w}_{k}$ is still bounded in $W^{1,2}(M,\rr^n)$ and, therefore, up to passing to a subsequence,
\[
\bar{w}_k \rightharpoonup \bar{w}_\infty \text{ in } W^{1,2}(M,\rr^n).
\]
On the other hand, as above, using Kondrakov theorem we see that (a subsequence of) $\bar{w}_k$ converges pointwise a.e. to $\bar{w}_\infty$. Therefore, by continuity of $\Pi$ and uniqueness of the pointwise limit, we have that
\[
\bar{w}_\infty = \Pi(v) \text{ a.e. on }M.
\]
In particular
\[
\Pi (v) (M) \subset \bar B^{\rr^n}_{R'}
\]
and, by property (c) of Lemma \ref{lemma_propertyenergy},
\begin{equation}\label{weakconv}
\liminf_{k \to +\infty} E_2(\bar{w}_k)  \geq E_2( \Pi(v)).
\end{equation}
To conclude, let us observe that, by a direct computation of the weak derivatives of $\bar w_k$,
\begin{equation}\label{energy-proj}
E_2(\bar w_k) \leq E_2(w_k).
\end{equation}
Indeed,
\[
\|d\bar w_k\|_{\mathrm{HS}}(x)=\|d w_k\|_{\mathrm{HS}}(x) 
\]
if $w_{k}(x)\in B_{R'}(0)$, while
\[
g(D\bar{w}_{k}^{i},D\bar{w}_{k}^{j})(x) h_{ij}\circ \bar w_k(x)
\leq g(Dw_{k}^{i},Dw_{k}^{j})(x) h_{ij}\circ w_k(x)
\]
if $w_{k}(x)\not\in
B_{R'}^{\rr^n}(0)$ because, in this set, $ h$ is the Euclidean metric. Therefore, taking the liminf in \eqref{energy-proj}, and using \eqref{strongconv} and \eqref{weakconv}, we obtain
\[
E_2(v) = \lim_{k\to + \infty} E_2(w_k)
	= \liminf_{k \to +\infty} E_2(w_k)
	\geq \liminf_{k \to +\infty} E_2(\bar{w}_k)
	\geq E_2(\Pi(v)),
\]
as required.

\end{proof}

Now, since the minimum $v$ is bounded, thanks to Corollary \ref{th_equiv-sob_3}, we have $v \in W^{1,2}_{\mathrm{ex}}(M,B_{2R_5},f)$. Actually, the above arguments show that $v$ minimizes the energy $\tilde{E}_2$ in this space. Indeed, if
$w \in W^{1,2}_{\mathrm{ex}}(M,B_{2R_5},f)$ satisfies $\tilde{E}_2(w) < \tilde{E}_2(v)$, then by Corollary \ref{th_equiv-sob_3}, $w \in W^{1,2}_{\mathrm{in}}(M,\tilde N,f)$, this would contradict the minimizing property of $v$ in the space $W^{1,2}_{\mathrm{in}}(M,\tilde N,f)$. Thus, we can apply Schoen-Uhlenbeck regularity theory with target the relatively compact, open subset $B_{2R_5}$ in the complete manifold  $\tilde N^n$ and conclude that $v \in C^{\infty}(\mathrm{int}M,B_{2R_5}) \cap C^0(M,B_{2R_5})$. As a matter of fact, since $\tilde N^n$ is Euclidean outside $B_{2R_5}$ and $v \in W^{1,2}_{\mathrm{in}}(M,\tilde N^n)$ satisfies the intrinsic harmonic mapping system, the boundary regularity can be also obtained directly from \cite{JM-MathAnn}. \\

Finally, as we just observed, $v$ is minimizing in $\tilde N^n$ therefore it is harmonic. Since the composition of the harmonic map $v$ with the squared distance function from $q_0$ in $\tilde N^n$ is subharmonic, an application of the standard maximum principle yields that $v(M) \subset B_{R_1} = B^N_{R_1}(q_0)$ and, therefore, it is a solution of the original Dirichlet problem.\\

The proof of the theorem is completed.

\begin{remark}
\rm{
As it is clear from the above proof, the solution to the Dirichlet problem obtained in Theorem \ref{main_th} is a smooth map minimizing the energy (with fixed boundary data), at least in case the squared distance function $\varrho$ from the origin of the ball is smooth and strictly convex on $B^N_{R+\varepsilon}(q_0)$ for some $\varepsilon>0$. In this direction, the convexity of the distance function seems to be natural, at least in high dimensions. In fact, consider a $n$-dimensional model manifold $N^n_{\sigma}$ with warping function $\sigma$ as in \eqref{model2}, and suppose that $a_0>0$ is the first radius at which $\sigma'(a_0)=0$, that is, the first radius at which $\varrho$ stops being strictly convex. Then, generalizing a result of Jager and Kaul into the sphere, \cite{JK,Hong}, Tachikawa, \cite{Tachi-manuscripta}, showed that the equator map $\psi: B_1^{\R^n}\to N_\sigma$ defined by $\psi (x)=(a_0,x/|x|)$ is a non regular minimizer when $0<-\sigma(a_0)\sigma''(a_0)<\frac{(n-2)^2}{4(n-1)}$ (while it is unstable otherwise). It would be interesting to understand whether this bound on the dimension has a geometric meaning and, in this case, whether Tachikawa's result can be generalized to the nonsymmetric setting. As for the first of these problems, note that at the point $a_0$, we have $-\sigma(a_0)\sigma''(a_0)=\sect_{\mathrm{rad}}/\sect_{\mathrm{tang}}$. Therefore, Tachikawa's condition looks like a pinching condition on these curvatures.

By the way, the same paper \cite{Tachi-manuscripta} contains also a proof of our main Theorem in case both the target manifold and the initial datum are rotationally symmetric.
}
\end{remark}

\appendix
\section{Analysis of known results and their proofs} \label{appendix-known}
Let $(M,g)$ be a compact, connected, $m$-dimensional Riemannian manifold with
smooth boundary $\partial M\neq\emptyset$ and let $(N,h)$ be a complete,
connected, $n$-dimensional Riemannian manifold without boundary.

In their 1977 seminal paper \cite{HKW-Acta}, S. Hildebrandt, H. Kaul and K.-O. Widman proved the existence of a solution $u\in C^{2}(\mathrm{int}M,N)\cap
C^{0}\left(  M,N\right)  $ of the Dirichlet problem for a boundary datum $f\in
C^{1}(\partial M,N)$ with sufficiently small range. More precisely, they
considered the case where $f(\partial M)$ is inside a \textit{uniformly
regular ball} $B_{R}^{N}(q_{0})$ of $N$. This means that the following
conditions are both satisfied:

\begin{enumerate}
\item[(a)] $\bar{B}_{R}^{N}(q_{0})\cap\mathrm{cut}(q)=\emptyset$ for
every $q\in\bar{B}_{R}^{N}(q_{0})$

\item[(b)] $R \sqrt{\Lambda}<\pi/2$, where $\Lambda \geq 0$ denotes an upper bound for the sectional curvatures of $N$ on $B_{R}^{N}(q_{0})$.
\end{enumerate}

\noindent In case condition (a) is required only at $q=q_{0}$  then the ball is
simply called \textit{regular}. This makes it clear what uniformity in the
above definition means. We also stress that, according to condition (a), every
pair of points $q_{1},q_{2}\in\bar{B}_{R}^{N}(q_{0})$ are connected by a
unique minimizing geodesic of $N$ \ whereas, by Hessian comparison, condition
(b) implies that the distance function $r(q)=d^{N}\left(  q,q_{1}\right)  $
from any point $q_{1}\in B_{R}^{N}(q_{0})$ is strictly convex on $B_{R}
^{N}(q_{0}) \cap B^N_{R}(q_1)$. Actually, the ball itself is strongly convex in the sense that
every pair of its points are connected by a unique minimizing geodesic of $N$
and this geodesic lies inside the ball.\medskip

The proof provided in \cite{HKW-Acta} consists of the following steps:\\

\noindent(i) Using the direct calculus of variations it is proved that the
energy functional $E_{2}$ has a minimizer among all the (intrisic)
$W_{\mathrm{in}}^{1,2}$-maps $v:M\rightarrow N$ subjected to the
boundary-trace condition $\left.  v\right\vert _{\partial M}=$ $f$, and whose
range $v(M)$ is confined in the closed ball $\bar{B}_{R}^{N}\left(
q_{0}\right) $ of $N$. Following the terminology introduced later in
\cite{Fu1-Annali}, we can speak of a constrained minimization problem. To
this purpose, all that is needed is that the compact ball $\bar{B}_{R}%
^{N}\left(  q_{0}\right) $ lies within the cut-locus of $q_{0}$, i.e.,
$R<\mathrm{inj}^{N}(q_{0})$. This allows the authors to define the Sobolev
space $W^{1,2}$ intrinsically.\\

\noindent(ii) Next, by assuming that the ball $B_{R}^{N}\left(  q_{0}\right)  $ is
regular, it is shown that the function $v(x)=d_N(x,q_0)^2$ satisfies $\Delta v\geq0$ in the sense of distributions. Hence, in case $f\left(
\partial M\right)  \subset B_{R}^{N}(q_{0})$, by the usual maximum principle,
$u(M)\subset B_{R}^{N}(q_{0})$. In particular, a-posteriori, $u$ satisfies the
harmonic mapping system, i.e., the Euler-Lagrange equations associated to the
energy functional. A slightly different argument, later extended by M. Fuchs to the $p$-harmonic setting (see below), is provided in \cite{H-LNM}.

It is worth to remark that, a-priori, the range of the minimizer $u$ produced
in step (i) could touch $\partial B_{R}^{N}(q_{0})$ and, therefore, it is not
possible to assume from the beginning that $u$ is a weak solution of the harmonic mapping system.
As a consequence, although the squared distance function $d_{N}(\cdot,q_{0})^{2}$ is
convex, one can not deduce a-priori the above subharmonicity property using
the composition law of the tension fields. Instead, the authors combine the
minimization property of $u$ with a careful comparison argument that implies
the quadratic-form estimate
\begin{equation}
Q_{y}(v,v)=h(v,v)-\sum_{k}y^{k}\Gamma_{ij}^{k}(y)h(v^{i},v^{j})\geq
0\label{intro-quadratic}%
\end{equation}
for every $y\in B_{R}^{N}\left(  q_{0}\right)  $ and for every $v^{1}%
,...,v^{m}\in T_{y}N$. We stress that the curvature assumption is vital for
the method to work and, as already alluded to above, it cannot be replaced by
the mere convexity of the distance function.\\

\noindent(iii) The final step consists in proving that $W_{\mathrm{in}}^{1,2}$-weak solution $u$ of the
(intrinsic) harmonic mapping system, has the expected regularity. As for the
interior regularity, the crucial point is to show that  $u$ is continuous and
this is achieved using somewhat Euclidean methods that require the uniformity
condition (b) in the definition of uniformly regular ball. Indeed, they need to use the smooth convexity inside the whole ball $B_R^N(q_0)$ of the squared distance from different origins. From the viewpoint of the regularity of the minimizer, and in view of the Schoen-Uhlenbeck theory, this condition sounds a little bit stronger than necessary. However, note that the authors are dealing with regularity
of weak solution of differential system rather than a minimizer and, therefore, it is reasonable that their assumptions are stronger.\\

In 1990, as a consequence of his constrained regularity theory, M. Fuchs, \cite{Fu2-Annali}, extended
Hildebrandt-Kaul-Widman result to the Dirichlet problem for $p$-harmonic maps,
namely, to critical points of the $p$-energy functional $E_{p}(v)=\frac
{1}{p}\int_{M}\| dv\| ^{p}_{\mathrm{HS}} d\vol$, $p \geq 2$. A comparison of the
proofs shows that, although Fuchs follows essentially the strategy of
\cite{HKW-Acta}, he introduced some interesting variations that we are going
to discuss.\\

\noindent(i) The minimizer $u$ of $E_{p}$ for the constrained minimization
problem is obtained in the class $W_{\mathrm{ex}}^{1,p}$ of extrinsic Sobolev maps. Next, the corresponding regularity theory developed by
Fuchs in \cite{Fu1-Annali, Fu2-Annali} is applied and gives that $u\in C^{0}(M,N)\cap
C^{1}(\mathrm{int}M,N)$. As a matter of fact:\\
	
	(i.a) Fuchs improves the full interior regularity result in \cite{HKW-Acta, H-LNM} in that he does not need the ball to be uniformly regular. Actually, Fuchs extends to the case $p>2$ the full regularity theory by Schoen-Uhlenbeck allowing also constrained minimizers. See \cite{DF-Manuscripta} for the case $p=2$.\\
		
	(i.b) In a key step of the proof of the interior regularity, the author implicitly identifies extrinsic and intrinsic Sobolev maps in order to deduce, using the curvature assumption, that a given constrained ``Minimizing Tangent Map" (MTM in the sense of Schoen-Uhlenbeck) is constant, via a certain integral inequality. As we have seen in Section \ref{Sobolev-spaces}, such an identification, although non-trivial, is absolutely correct. We also observe that, regardless of any regularity question, proving the constancy of a constrained MTM is non-trivial even if the range of the map is inside a convex supporting ball: a-priori, the minimizer does not satisfies any differential equation or inequality that can be coupled directly with a convex function to get the desired vanishing. As already remarked above, the desired vanishing is obtained using the curvature condition in a crucial way.\\

\noindent (ii) Next, using the assumption $f\left(  \partial M\right)  \subset
B_{R}^{N}(q_{0})$, it is proved that $u(M)\subset B_{R}^{N}(q_{0})$ and,
therefore, that $u$ is a weak solution of the $p$-harmonic mapping system. The
idea, following \cite{H-LNM}, is to use the quadratic-form inequality
(\ref{intro-quadratic}) joint with the minimization property of $u$ to get the
integral inequality for $v(x)=d_{N}^2(u(x),q_{0})$
\begin{equation}
\int_{\left\{  x\in M:v(x)\geq R^{2}\right\}  }h( \|du\|_{\mathrm{HS}} ^{p-2}\nabla v,\nabla v)  \leq0.\label{intro-integral}
\end{equation}
It is worth to observe that this inequality is strictly weaker than the fact
that $v$ is a distributional solution of
\[
\operatorname{div}(\|du\|_{\mathrm{HS}} ^{p-2}\nabla v)\geq0\text{, on }\mathrm{int}M.
\]
This fact would imply
immediately the desired conclusion according to the maximum principle obtained
in \cite{PV-pDirichlet-symmetry}. Moreover, inspection of the proof shows that (as in \cite{H-LNM})
the validity of (\ref{intro-integral}) would not require that $u$ is $C^{1}$
and would work perfectly even in the $W^{1,p}$-regularity assumption of $u$
provided the Sobolev space was understood in the intrinsic sense.

In any case, after
all, even in Fuchs arguments the curvature condition cannot be replaced by a
convexity assumption of the distance function.\\

To conclude this appendix, let us recall some facts about parabolic methods. As stated in the introduction, the nonpositive curvature of the target is usually required to get global existence of the harmonic maps flow. Ding and Lin weakened the curvature restriction, obtaining a global existence result for the heat flow, and its convergence to a harmonic map, in case the target $N$ is a closed manifolds with universal cover supporting a quadratic convex function and $\partial M=\emptyset$, \cite{DL-JPDE}. A generalization of their result to the flow with a prescribed boundary datum on $\partial M\neq\emptyset$ would give a parabolic proof of Theorem \ref{main_th} in the special case where the target geodesic ball comes from a closed manifold with universal cover supporting a strictly convex function. However, this seems not to be the case for a general convex ball. 

By the way, an attempt to generalize the parabolic techniques introduced in \cite{DL-JPDE} to the boundary case has been done in \cite{DL2-JPDE}. However, in their result the authors need once again N to be nonpositively curved.

\section{The intrinsic metric-space approach}\label{appendix-metricspaces}
In recent years, starting from the seminal papers by M. Gromov and R. Schoen, \cite{GS-IHES}, N. Korevaar and R. Schoen, \cite{KS-CAG}, and J. Jost, \cite{Jo, Jo1}, a completely intrinsic approach to Sobolev spaces and corresponding trace theory for maps $u:M \to X$ from an $m$-dimensional Riemannian manifold $(M,g)$ into  a complete, separable metric space $(X,d)$ has been developed. The definition of the Sobolev class is quite technical and involves a suitable energy functional which is defined  in terms of the distance function in the target via a limiting procedure. More explicitly, for $p=2$, letting
\[
e_{\varepsilon}(u)(x) = \frac{m}{ \area(\partial B_{\varepsilon}(x))} \int_{\partial B_{\varepsilon}(x)}\frac{d^2(u(x),u(y))}{\varepsilon^2} d\area(y)
\]
with $x \in \interior M$ and $0 < \varepsilon \ll 1$, it is said that $u$ has finite $2$-energy if
\[
\sup_{{f \in C_c(\interior M), 0 \leq f \leq 1}} \left( \limsup_{\varepsilon \to 0} \int_{\interior M} f(x)e_{\varepsilon}(u)(x) d \vol(x) \right) < +\infty.
\]
When this happens,  one has the weak convergence of measures
$e_{\varepsilon}(u) d \vol \rightharpoonup e(u) d \vol$
and, by definition, the Radon-Nikodyn derivative $e(u)$ of the limit is the energy density $ |du|^2$ of $u$.
In the same foundational papers, the authors develop the theory of minimizing and harmonic maps in this general setting. In particular, \cite{KS-CAG} and its companion \cite{Se-CAG} contain a full treatment of the $\Lip$-interior regularity theory and the $C^{\alpha}$-boundary regularity theory for energy-minimizing maps into a simply connected target of non-positive curvature (in the sense of Alexandrov). Whence, in \cite{KS-CAG}, the authors obtain a complete solution of the homotopy Dirichlet problem for any datum $f \in C^{\alpha}(M,X) \cap W^{1,2}(M,X)$ and a compact target $(X,d)$ whose universal covering has non-positive curvature. The map is a global minimizer and it is unique by the convexity of the energy  in non-positive curvature contexts. Clearly, the case where $(X,d)$ itself is simply connected of non-positive curvature is included in the picture.\\

Meanwhile, a parallel development of Sobolev spaces of maps and corresponding energy functionals was proposed by Yu. G. Reshetnyak, \cite{Re-Siberian, Re-Siberian-1, Re-Siberian-2}. His viewpoint is completely different from Korevaar-Schoen's and puts the emphasis on the fact that a map $u \in W^{1,p}(M,X)$ well behaves with respect to the composition with any $\Lip$-function $\varphi:X \to \rr$. Roughly speaking, the norm of the gradient of $\varphi \circ u$ should be dominated a.e. by $\Lip(\varphi)$ times the norm of the differential of $u$, where $\Lip(\varphi)$ denotes the Lipschitz constant of the function. When $\varphi(z) = d(z,z_0)$ is the $1-\Lip$ distance function from a point $z_0$ one therefore requires that $| \nabla (\varphi \circ u(x)) | \leq  \omega(x)$, for some $\omega \in L^p(M)$ independent of the reference point $z_0$ and defines the energy density $|d u(x)| \in L^p(M)$ as the smallest of such functions $\omega$. The corresponding energy functional turns out to be weakly lower-semicontinuous on $W^{1,p}(M,X)$ and the theory of energy-minimizing maps can be carry over. In fact, it was observed in \cite{Re-Siberian-1} that this definition of Sobolev maps is just an equivalent manifestation of Korevaar-Schoen's, that is, each of the defined energies is controlled by a multiple of the other. Therefore one can take advantage of both these viewpoints depending on the problem at hand.\\

It is worth to mention a third approach proposed by P. Hajlasz and J. Tyson, \cite{HT-Michigan}, that, in some sense, represents a metric version of the manifold-valued extrinsic definition of Sobolev space of maps. This time, one has the dual $V^{\ast}$ of a separable Banach space $V$ and assumes that the target metric space $(X,d)$ is isometrically embedded into $V^{\ast}$. When $X$ is separable, we can always take $V = \ell^1(X)$. A map $u:M \to X$ is said to be in the $W^{1,p}$-class if its realization $\check u : M \to V^{\ast}$ is $W^{1,p}$ in the sense that  $\check u$ is $L^2$ with first weak derivatives in $L^2$ (or, equivalently, if $\check u$ is $W^{1,p}(M,V^{\ast})$ in Reshetnyak sense). It is shown that, unlike the case of non-compact manifolds, this definition does not depend on the choice of the isometric embedding. Note that, unlike the manifold-valued case, the embedding is isometric in the distance-sense.
\smallskip
 
It is natural to ask whether these purely intrinsic approaches to Sobolev maps and the consequent energy-minimizing theory can be extended to convex-supporting targets thus leading to an intrinsic proof of our main Theorem. It is likely that the energy-nonincreasing Lip-projection procedure can be made formal in Reshetnyak setting and, since Reshetnyak Sobolev maps are indeed Korevaar-Schoen Sobolev maps, one can try to use the trace and regularity theory of \cite{KS-CAG, Se-CAG}. However, switching from one of these energies  to the other one is not without pain because these energies are just equivalent, not equal. Moreover, concerning both the existence and the regularity issues, inspection of the arguments supplied by Korevaar-Schoen shows that the authors use in a crucial way both the curvature condition and the convexity of the target. This is visible, for instance, in their use of the convexity of the energy functional to get the existence of a minimizer $u$, \cite[Theorem 2.2]{KS-CAG}, and in the derivation of the (weak) subharmonicity property the function $d^2(u(x),x_0)$ in $\interior M$, a fact that is crucial to prove regularity, \cite[Section 2.4]{KS-CAG}, \cite[Section 2]{Se-CAG}. Actually, in light of the computations in the smooth case, the metric viewpoint could lead to a purely  intrinsic proof of an existence and regularity result when the range of the boundary datum is inside a convex, regular ball, where the curvature is still understood in the Alexandrov sense. Regularity results in this direction are obtained in \cite{Fg-CalcVar}. 
\smallskip

In conclusion it seems that the metric approach, in its present form, cannot be applied directly to obtain an intrinsic proof of our main result. We guess that, with some further effort, it could lead to a  proof (with really intrinsic Sobolev space) of Hildebrandt-Kaul-Widman result, i.e., when the target is a convex, regular ball. However, even this conclusion would be less general than our result when applied to smooth target, as it is already observed several times in the previous sections.\\\bigskip

\textbf{Acknowledgement.}
The authors were partially supported by the \textit{Gruppo Nazionale per l'Analisi Matematica, la Probabilit\`a e le loro Applicazioni (GNAMPA)}. The first author was also partially supported by the Italian project PRIN 2010-2011 ``Variet\'a reali e complesse: geometria, topologia e analisi armonica". Most of this research was done during the visit of the first author to the Laboratoire Analyse G\'eom\'etrie et Applications in the Department of Mathematics of the University Paris 13. He is deeply grateful to this institution for the financial support and for the warm hospitality. Finally, the authors would like to thank Batu G\"uneysu for a careful reading of the first draft of the manuscript.


\begin{thebibliography}{9999}                                                                                             


\bibitem[Da]{Da}B. Dacorogna, \textit{Direct methods in the calculus of
variations.} Applied Mathematical Sciences, \textbf{78}. Springer-Verlag, Berlin, 1989.     

\bibitem[DLi]{DL2-JPDE} W.Y. Ding, J. Li, \textit{A new proof of Hamilton's theorem on harmonic maps from manifolds with boundary.}
J. Partial Differential Equations \textbf{10} (1997), no. 2, 169--173

\bibitem[DLin]{DL-JPDE} W.Y. Ding, F.-H. Lin, \textit{A generalization of Eells-Sampson's theorem.} J. Partial Differential Equations \textbf{5} (1992), no. 4, 13--22. 

\bibitem[DF]{DF-Manuscripta} F. Duzaar, M. Fuchs, \textit{Optimal regularity theorems for variational problems with obstacles}. Manuscripta Math. \textbf{56} (1986), 209--234.

\bibitem[ES]{ES-AmerJour} J. Eells, J. Sampson, \textit{Harmonic mappings of Riemannian manifolds}. Amer. J. Math. \textbf{86} (1964) 109--160.

\bibitem[Fu]{Fu-Carolina} M. Fuchs, \textit{Everywhere regularity theorems for mappings which minimize $p$-energy}. Comment. Math. Univ. Carolin. \textbf{28} (1987), 673--677. 

\bibitem[Fu1]{Fu1-Annali} M. Fuchs, \textit{$p$-harmonic obstacle problems. I. Partial regularity theory}. Ann. Mat. Pura Appl. \textbf{156} (1990), 127--158.

\bibitem [Fu2]{Fu2-Annali} M. Fuchs, \textit{$p$-harmonic obstacle problems. III. Boundary regularity.} Ann. Mat. Pura Appl.  \textbf{156} (1990), 159--180.
             
\bibitem [Fg]{Fg-CalcVar} B. Fuglede, \textit{Harmonic maps from Riemannian polyhedra to geodesic spaces with curvature bounded from above}. Calc. Var. Partial Differential Equations \textbf{31} (2008), 99--136. 

\bibitem[GS]{GS-IHES} M. Gromov, R. Schoen, \textit{Harmonic maps into singular spaces and $p$-adic superrigidity for lattices in groups of rank one.}  Inst. Hautes \'{E}tudes Sci. Publ. Math. No. \textbf{76} (1992), 165--246. 

\bibitem[HT]{HT-Michigan} P. Hajlasz, J. Tyson, \textit{Sobolev Peano cubes}.  Michigan Math. J. \textbf{56} (2008),  687--702.

\bibitem[HL]{HL-CPAM} R. Hardt, F.-H. Lin, \textit{Mappings minimizing the $L^p$ norm of the gradient}. Comm. Pure Appl. Math. \textbf{40} (1987), 555--588.

\bibitem[HW]{HW-handbook} F. H\'{e}lein, J. Wood, \textit{Harmonic maps.}  Handbook of global analysis, 417--491, 1213, Elsevier Sci. B. V., Amsterdam, 2008. 

\bibitem[Hi]{H-LNM}S. Hildebrandt, \textit{Harmonic mappings of Riemannian manifolds}. Lecture Notes in Math., \textbf{1161}, Springer, Berlin, 1985. 

\bibitem[HKW]{HKW-Acta} S. Hildebrandt, H. Kaul, K.  Widman, \textit{An existence theorem for harmonic mappings of Riemannian manifolds}. Acta Math. \textbf{138} (1977), 1--16.

\bibitem[Ho]{Hong} M.-C. Hong, \textit{On the J\"ager-Kaul theorem concerning harmonic maps.} Ann. Inst. H. Poincaré Anal. Non Linéaire \textbf{17} (2000), no. 1, 35--46. 

\bibitem[JK]{JK}  W. J\"ager, H. Kaul, \textit{Rotationally symmetric harmonic maps from a ball into a sphere and the regularity problem for weak solutions of elliptic systems.} J. Reine Angew. Math. \textbf{343} (1983), 146--161.

\bibitem[Jo]{Jo}J. Jost, \textit{Generalized Dirichlet forms and harmonic maps}, Calc. Var. Partial Differential Equations \textbf{5} (1997), 1--19.

\bibitem[Jo1]{Jo1}J. Jost, \textit{Nonpositive Curvature: Geometric and Analytic Aspects}. Lectures in Mathematics, ETH Zürich, Birkh\"auser, Basel, 1997.

\bibitem[JM]{JM-MathAnn}J. Jost, M. Meier, \textit{Boundary regularity for minima of certain quadratic functionals}. Math. Ann. \textbf{262} (1983), 549--561. 

\bibitem[KS]{KS-CAG} N. Korevaar, R. Schoen, \textit{Sobolev spaces and harmonic maps for metric space targets}. Comm. Anal. Geom. \textbf{1} (1993), 561--659.

\bibitem[Lu]{Lu-Crelle}S. Luckhaus, \textit{Existence and regularity of weak solutions to the Dirichlet problem for semilinear elliptic systems of higher order}. J. Reine Angew. Math. \textbf{306} (1979), 192--207.

 \bibitem[M\"u]{Mu-JMAA}O. M\"uller, \textit{A note on closed isometric embeddings}. J. Math. Anal. Appl. \textbf{349} (2009), 297--298.
 
\bibitem[P1]{P-natural-TAMS}R. Palais, \textit{Natural operations on
differential forms.} Trans. Amer. Math. Soc. \textbf{92} (1959), 125--141.

\bibitem[P2]{P-extendingPAMS}R. Palais, \textit{Extending diffeomorphisms.
}Proc. Amer. Math. Soc. \textbf{11 }(1960), 274--277.

\bibitem[PV1]{PV-pDirichlet-compact}S. Pigola, G. Veronelli, \textit{On the
Dirichlet problem for $p$-harmonic maps I: compact targets}. To appear in Geom. Ded.

\bibitem[PV2]{PV-pDirichlet-symmetry}S. Pigola, G. Veronelli, \textit{On the
Dirichlet problem for $p$-harmonic maps II: Cartan-Hadamard targets with
special structure.} Preprint 2013.

\bibitem[Re]{Re-Siberian}  Yu. G. Reshetnyak, \textit{Sobolev classes of functions with values in a metric space}. (Russian) Sibirsk. Mat. Zh. \textbf{38} (1997),  657--675, iii--iv; translation in Siberian Math. J. \textbf{38} (1997), 567--583.

\bibitem[Re1]{Re-Siberian-1} Yu. G. Reshetnyak, \textit{Sobolev classes of functions with values in a metric space. II}. (Russian) Sibirsk. Mat. Zh. \textbf{45} (2004), 855--870; translation in Siberian Math. J. \textbf{45} (2004), 709--721.

\bibitem[Re2]{Re-Siberian-2} Yu. G. Reshetnyak, \textit{On the theory of Sobolev classes of functions with values in a metric space}. (Russian) Sibirsk. Mat. Zh. \textbf{47} (2006), 146--168; translation in Siberian Math. J. \textbf{47} (2006), 117--134. 

\bibitem[SU1] {ScUh-JDG}R. Schoen, K. Uhlenbeck, \textit{A regularity theory for
 harmonic maps.} J. Differential Geom. \textbf{17} (1982), no. 2, 307--335.

\bibitem[SU2] {ScUh-JDG2}R. Schoen, K. Uhlenbeck, \textit{Boundary regularity and the Dirichlet problem for harmonic maps.} J. Differential Geom. \textbf{18} (1983), no. 2, 253--268.

\bibitem[Se]{Se-CAG}T. Serbinowski, \textit{Boundary regularity of harmonic maps to nonpositively curved metric spaces}. Comm. Anal. Geom. \textbf{2} (1994), 139--153.

\bibitem[T]{Tachi-manuscripta} A. Tachikawa, \textit{Rotationally symmetric harmonic maps from a ball into a warped product manifold.}
Manuscripta Math. \textbf{53} (1985), no. 3, 235--254. 

\bibitem[W]{Wh-Acta} B. White, \textit{Homotopy classes in Sobolev spaces and the existence of energy minimizing maps}. Acta Math. \textbf{160} (1988), 1--17. 

\end{thebibliography}
\end{document}